\documentclass{article}

\usepackage{amsmath,amsfonts,euscript,amssymb,amsthm,a4,times,color}

\usepackage[dvipsnames]{xcolor}

\usepackage{amssymb,amsmath,amsfonts,amsthm,latexsym,enumerate, esint, manyfoot}
\usepackage{graphics,graphicx}
\usepackage[english]{babel}

\usepackage[colorlinks,linkcolor = blue,citecolor = red,pagebackref=false]{hyperref}

\newcommand{\mollifytime}[2]{[\![ #1 ]\!]_{#2}}

\newcommand{\N}{\mathbb N}

\newcommand{\eps}{\varepsilon}

\newcommand{\wto}{\rightharpoondown}
\newcommand{\wsto}{\overset{\raisebox{-1ex}{\scriptsize $*$}}{\rightharpoondown}}

\newcommand{\dive}{\mathrm{div}\,}

\def\real{\mathbb R}

\def\N{\mathbb N}

\def\R{\mathbb R}

\def\proofof#1{\begin{proof}[Proof of #1]}

\def\d{\mathrm{d}}

\def\dx{\d x}

\def\dt{\d t}
\def\dz{\d z}

\def\={^{\wedge}}

\def\eqn#1$$#2$${\begin{equation}\label#1#2\end{equation}}

\def\mir1{\mathcal L_1}

\DeclareMathOperator\spt{spt}
\DeclareMathOperator*\esssup{ess\ sup}
\DeclareMathOperator\dist{dist}

\newtheorem{thm}{Theorem}[section]

\newtheorem{lem}[thm]{Lemma}

\newtheorem{prop}[thm]{Proposition}

\theoremstyle{definition}
\newtheorem{defn}[thm]{Definition}
\newtheorem{rem}[thm]{Remark}

\newcommand{\xint}[3]{{\setbox0=\hbox{$#1{#2#3}{\int}$}
   \vcenter{\hbox{$#2#3$}}\kern-.5\wd0}}
\newcommand{\mint}{\mathchoice
   {\xint\displaystyle\textstyle-}
   {\xint\textstyle\scriptstyle-}
   {\xint\scriptstyle\scriptscriptstyle-}
   {\xint\scriptscriptstyle\scriptscriptstyle-}
   \!\int}

\makeatletter \catcode`@=11
\newbox\tr@tto
\setbox\tr@tto=\hbox{{\count0=0\dimen0=-,9pt\dimen1=1,1pt\loop\ifnum
    \count0<11 \advance \count0 by1 \vrule width.51pt height\dimen1
    depth\dimen0\kern-0.17pt\advance\dimen0 by-0.05pt\advance\dimen1
    by0.1pt\repeat \loop\ifnum\count0<21\advance \count0 by1 \vrule
    width.6pt height\dimen1 depth\dimen0\kern-0.2pt \advance\dimen0
    by-0.1pt\advance\dimen1 by 0.05pt\repeat}}
\def\medint{\displaystyle\copy\tr@tto\kern-10.4pt\int}
\numberwithin{equation}{section}

\allowdisplaybreaks

\usepackage{amsmath,amsfonts,euscript,amssymb,amsthm,a4,times,color}

\usepackage[dvipsnames]{xcolor}
\usepackage{ulem}

\usepackage{amssymb,amsmath,amsfonts,amsthm,latexsym,enumerate, esint, manyfoot}
\usepackage{graphics,graphicx}
\usepackage[english]{babel}

\def\real{\mathbb R}

\def\N{\mathbb N}

\def\R{\mathbb R}

\def\proofof#1{\begin{proof}[Proof of #1]}

\def\d{\mathrm{d}}

\def\A{\mathcal{A}}

\def\dx{\d x}

\def\dt{\d t}

\def\={^{\wedge}}

\def\eqn#1$$#2$${\begin{equation}\label#1#2\end{equation}}

\def\mir1{\mathcal L_1}

\def\AA{\mathbf{A}}
\def\BB{\mathbf{B}}

\makeatletter \catcode`@=11
\newbox\tr@tto
\setbox\tr@tto=\hbox{{\count0=0\dimen0=-,9pt\dimen1=1,1pt\loop\ifnum
    \count0<11 \advance \count0 by1 \vrule width.51pt height\dimen1
    depth\dimen0\kern-0.17pt\advance\dimen0 by-0.05pt\advance\dimen1
    by0.1pt\repeat \loop\ifnum\count0<21\advance \count0 by1 \vrule
    width.6pt height\dimen1 depth\dimen0\kern-0.2pt \advance\dimen0
    by-0.1pt\advance\dimen1 by 0.05pt\repeat}}
\def\medint{\displaystyle\copy\tr@tto\kern-10.4pt\int}
\numberwithin{equation}{section}

\allowdisplaybreaks

\begin{document}

\title{Local boundedness for solutions to \\parabolic $p,q$-problems with degenerate coefficients}

\author{ Flavia Giannetti - Antonia Passarelli di Napoli - Christoph Scheven \\ \\
\normalsize{Dipartimento di Matematica e Applicazioni "R.
Caccioppoli"} \\ \normalsize{Universit\`{a} di Napoli ``Federico
II", via Cintia - 80126 Napoli}\\ \\ \normalsize{Faculty for Mathematics - University Duisburg-Essen}\\
\normalsize{Thea-Leymann-Str. 9, D-45127 Essen, Germany}
 \\ \normalsize{e-mail:
giannett@unina.it, antpassa@unina.it, christoph.scheven@uni-due.de}}

\bigskip

\maketitle

\medskip

\begin{abstract}\noindent
We investigate the local boundedness of solutions $u:\Omega_T\to\R$ to parabolic equations of the form
\begin{equation*}
  \partial_tu-\dive \A(x,t,Du)=0 \qquad\mbox{in }\Omega_T=\Omega\times(0,T)
\end{equation*}
that satisfy $p,q$-growth conditions and have degenerate coefficients. More precisely, we assume structure conditions of the type
\begin{align*}
|\mathcal{A}(x,t,\xi)|&\le b(x,t)(\mu^2+|\xi|^2)^{\frac{q-1}{2}},\\
\langle \A(x,t,\xi),\xi\rangle&\ge a(x,t)(\mu^2+|\xi|^2)^{\frac {p-2}{2}}|\xi|^2,
\end{align*}
for $2\le p\le q$ and $\mu\in[0,1]$, where the functions $a^{-1}, b:\Omega_T\to\R$ are possibly unbounded and only satisfy some integrability condition. Under a certain assumption on the gap between $p$ and $q$, we prove two main results. First, we show that subsolutions that are contained in the natural energy space are locally bounded from above. Second, for parabolic equations with a variational structure, we use these bounds to show the existence of locally bounded variational solutions.
\end{abstract}

\noindent {\footnotesize {\bf AMS Classifications:} 
35K55; 35K65; 35B45; 35B65.}

\noindent {\footnotesize {\bf Key words.} {\it Non-uniformly parabolic equations, p,q-growth, Local boundedness}}

\bigskip
\bigskip
\bigskip


\section{Introduction}
We consider parabolic equations of the type
\begin{equation}\label{equa0}
  \partial_tu-\dive \A(x,t,Du)=0 \qquad\mbox{in }\Omega_T=\Omega\times(0,T),
\end{equation}
where $\Omega$ is an open bounded set in $\real^n$, $n\ge2$, and $T>0$. 
The Carath\'eodory function $\mathcal{A}(x,t,\xi):\Omega_T\times\R^{n}\to\R^{n}$ is  such that
 for almost every $(x,t)\in\Omega_T$ and all $\xi,\eta\in
\real^{n}$, the following  conditions are satisfied
\begin{align}\label{ip1}
|\mathcal{A}(x,t,\xi)|&\le b(x,t)(\mu^2+|\xi|^2)^{\frac{q-1}{2}},\\[0.8ex]
\label{ip2}
\langle \A(x,t,\xi),\xi\rangle&\ge a(x,t)(\mu^2+|\xi|^2)^{\frac {p-2}{2}}|\xi|^2,
\end{align}
for exponents $2\le p\le q$ and a parameter  $\mu\in[0,1]$. The functions
$a:\Omega_T\to[0,\infty)$ and $b:\Omega_T\to[0,\infty)$ satisfy 
\begin{equation}\label{summab}
\frac{1}{a(x,t)}\in L^\alpha(\Omega_T)\quad\text{and}\quad b(x,t)\in L^\beta(\Omega_T),
\end{equation}
for a couple of exponents $\alpha,\beta>1$.
A simple model case of the systems that we have in mind is given by
\begin{equation*}
  \partial_tu-\dive\Big(a(x,t)|Du|^{p-2}Du+b(x,t)|Du|^{q-2}Du\Big)=0
  \qquad\mbox{in $\Omega_T$},
\end{equation*}
with $a(x,t)\ge0$ possibly zero 
and $b(x,t)\ge0$ unbounded in $\Omega_T$ and  $2\le p\le q$. 

The elliptic counterpart  of equation \eqref{equa0} reads as
\begin{equation}\label{uno}
  \dive \A(x,Du)=0 \qquad\mbox{in }\Omega,
\end{equation}
with  \begin{equation}\label{unobis}\lambda(x)(\mu^2+|\xi|^2)^{\frac{p-2}{2}}|\xi|^2\le\langle \A(x,\xi),\xi\rangle \le \Lambda(x)(\mu^2+|\xi|^2)^{\frac{q-2}{2}}|\xi|^2\end{equation}
for a.e. $x\in \Omega$  and  for every $\xi\in \R^n$. This condition combines the so-called $p,q$-growth conditions with degenerate or singular coefficients. More precisely, the common terminology refers to equation  \eqref{uno} as degenerate if   $\lambda^{-1}$ is unbounded and singular if $\Lambda$ is unbounded. 
Degenerate or singular equations in this sense were first considered by 
Trudinger \cite{Trudinger:1971} in the linear case, in which $p=q=2$ and $\mu=0$. He proved that any weak solution of 
\eqref{uno} is locally bounded in $\Omega $, under the following
integrability assumptions on $\lambda $ and $\Lambda $ 
\begin{equation}
\lambda ^{-1}\in L_{\mathrm{loc}}^{r}(\Omega )\quad \text{and}\quad \Lambda
_{1}=\lambda ^{-1}\Lambda ^{2}\in L_{\mathrm{loc}}^{\sigma }(\Omega )\quad \text{%
with $\frac{1}{r}+\frac{1}{\sigma }<\frac{2}{n}$}.
\label{risultato-trudinger}
\end{equation}%
The result by Trudinger was extended in many settings and directions:
firstly, by Trudinger himself in \cite{trud2} and later by Fabes, Kenig and
Serapioni in \cite{fabes-kenig-serapioni}; Pingen in \cite{pingen} dealt
with systems. 
Very recently, condition~\eqref{risultato-trudinger} has been improved to
\begin{equation*}
\frac{1}{r}+\frac{1}{\sigma }<\frac{2}{n-1}
\end{equation*}
by Bella and Sch\"affner \cite{Bella-Schaeffner:2021}, see also \cite{Bella-Schaeffner:2023, Hirsch-Schaeffner} for corresponding results in the nonlinear case. 
Related results up to the boundary are contained in \cite{DeFilippis-Piccinini}. 
The literature
concerning non-uniformly elliptic problems is extensive and we refer the
interested reader to the references in \cite{bdgp}. 

Another field of research that has been intensively studied in the literature are elliptic equations or functionals with $p,q$-growth, which corresponds to~\eqref{unobis} with bounded functions $\lambda^{-1},\Lambda$, but now $p<q$. 
The study of regularity in the $p,q$-growth context started
with  Marcellini \cite{mar89, mar91} and, since then,
many papers devoted  to this subject have been provided. Here, we just refer to   \cite{DeFilippis-Mingione1,DeFilippis-Mingione2,mar20-2,dark-survey,Mingione-Radulescu} and the references therein.

More recently, the $p,q$-growth condition has been combined  with degenerate or singular coefficients in \cite{CMM, CMMP}. In these papers, the authors proved the local boundedness and later on the Lipschitz regularity of  minimizers of integral functionals whose Euler-Lagrange equations are of type \eqref{uno} under the general condition \eqref{unobis}.

In the parabolic setting, the regularity theory for problems with non-standard growth is far less complete. Gradient bounds for solutions to parabolic equations  with $p,q$-growth have been obtained in \cite{BDM:pq-equations,singer1,DeFilippis}, and \cite{GPS} contains a result on higher differentiability. Local boundedness for gradient flows with $p,q$-growth has been established in \cite{singer2}. Existence of solutions to such gradient flows, also in the vector-valued case, has been proved via variational methods in \cite{BDM:pq,BDM:variational,singer3}.

To our knowledge, no regularity results seem to be available for solutions of  degenerate parabolic problems of the type \eqref{equa0} under the assumptions \eqref{ip1}, \eqref{ip2} and \eqref{summab}, even in the case $q=p$.
The aim of this paper is to give a first contribution to the study of this kind of problems, by investigating the local boundedness of the  solutions.

It is well known that to get regularity under $p,q$-growth the exponents $q$
and $p$ cannot be too far apart; usually, in the uniformly elliptic case, the gap between $p$ and $q$ is
described by a condition relating $p,q$ and the dimension $n$. In our case
we take into account the possible degeneracy of $a(x,t)$ and $b(x,t)$, so that the gap will depend
on $\alpha$, the summability exponent of $a^{-1}$ that \textquotedblleft
measures\textquotedblright\ how degenerate $a$ is, and the exponent $\beta$
that tells us how far $b(x,t)$ is from being bounded. More precisely, our results rely on the assumption 
\begin{equation}\label{gap}
p\le q<p\frac{\alpha}{\alpha+1}\frac{\beta-1}{\beta}+\frac{2}{n+2}.
\end{equation}
We note that this assumption implicitly contains a restriction on the possible values of $\alpha$ and $\beta$ of the form 
\begin{equation}
    \label{condition-alpha-beta}
    \frac{\alpha}{\alpha+1}\frac{\beta-1}{\beta} >1-\frac{2}{p(n+2)}\ge 1-\frac{1}{n+2}.
\end{equation} 

If $\alpha=\beta=\infty $ then %
\eqref{gap} reduces to $\frac{q}{p}<1+\frac{2}{p(n+2)}$ that is reminiscent of the bound obtained in
 \cite{GPS}. Note that the bound on the ratio $\frac{q}{p}$ here depends on  $n+2$ instead of $n$, as it happens in the elliptic setting, due to the different scaling in time.
 Our main results are contained in the following two subsections.

\subsection{Local bounds for weak subsolutions}

Our first main result is a local upper bound for weak subsolutions
that are contained in the natural energy space.  
In the sequel, we use the abbreviations 
\begin{equation}\label{def-p-alpha}
    p_\alpha:=\frac{p\alpha}{\alpha+1}
    \qquad\mbox{and}\qquad
    q_\beta:=\frac{q\beta}{\beta-1}.
\end{equation}

We consider weak subsolutions in the following sense. 
\begin{defn}\label{def:weak-solution}
A function $u\in L^{q_\beta}(0,T;W^{1,q_\beta}(\Omega))\cap
C_w([0,T];L^2(\Omega))$ is a weak subsolution of~\eqref{equa0} if 
\begin{equation*}
    \int_{\Omega_T}\big(-u\partial_t\varphi
    +\langle \A(x,t,Du),D\varphi\rangle\big)\,\dx\dt\le 0
  \end{equation*}
  holds true for every nonnegative test function $\varphi\in C_0^\infty(\Omega_T,\R_{\ge0})$.
  \end{defn}

Here, we used the notation $C_w([0,T];L^2(\Omega))$ for the space
of maps $u\in L^\infty(0,T;L^2(\Omega))$ that are continuous with
respect to the weak $L^2$-topology.

\begin{thm}\label{thm:subsolution}
   Assume that $u\in L^{q_\beta}(0,T; W^{1,q_\beta}(\Omega))$ is a weak subsolution to~\eqref{equa0}, under assumptions~\eqref{ip1}, \eqref{ip2}, \eqref{summab} and \eqref{gap}.
   Then, for every cylinder $Q:=Q_{2\rho,2\sigma}(z_o)$ with
   $\dist(Q,\partial_{\mathrm{par}}\Omega_T)>0$ for parameters $\rho,\sigma>0$, we have the following estimate on the smaller cylinder $\frac12 Q:=Q_{\rho,\sigma}(z_o)$.
    \begin{align*}
        \esssup_{\frac12 Q}u
         \le
        \mathrm{C}_{a,b}\max\left\{\bigg(\frac{\sigma}{\rho^{\frac{p}{p+1-q}}}\bigg)^{\vartheta_1}\frac{1}{\rho^{(q-p)\vartheta_2}}\bigg(\mint_{Q} u_+^m\,\d z\bigg)^{\vartheta_3},\bigg(\mint_{Q} u_+^m\,\d z\bigg)^{\frac1m},\bigg(\frac{\rho^{\frac{p}{p+1-q}}}{\sigma}\bigg)^{\frac{p+1-q}{p-2+2(q-p)}},\rho\right\}.
    \end{align*} 
    Here, we abbreviated
    $m:=\frac{p_\alpha(n+2)}{n}$. The constants $\vartheta_i>0$, $i\in\{1,2,3\}$, depend at most on $n,p,q,\alpha,$ and $\beta$ and the constant $\mathrm{C}_{a,b}$ depends additionally on the quantities
$\mint_{Q}a^{-\alpha}\,\dx\dt$ and $\mint_{Q}b^{\beta}\,\dx\dt$.
\end{thm}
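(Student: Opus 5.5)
The proof will be a De Giorgi iteration on shrinking cylinders and increasing levels. The iteration must handle two difficulties: the degenerate weights $a,b$ via Hölder's inequality, and the $p,q$ gap via the smallness condition \eqref{gap}.

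\emph{Caccioppoli inequality.} First I derive an energy estimate for the truncations $(u-k)_+$, $k\ge0$. I test the subsolution inequality with $\varphi=(u-k)_+\zeta^q$, where $\zeta$ is a space-time cutoff between nested cylinders $Q_{r_2,s_2}\subset Q_{r_1,s_1}$. Since $u$ only lies in $L^{q_\beta}(W^{1,q_\beta})\cap C_w(L^2)$, this test must be justified by a Steklov or exponential time mollification $\mollifytime{\cdot}{h}$. Using \eqref{ip2} on the left and \eqref{ip1} plus Young's inequality on the right, I expect to obtain
\begin{align*}
&\sup_t\int (u-k)_+^2\zeta^q\,\dx+\iint a\,|D(u-k)_+|^p\zeta^q\,\dx\dt\\
&\quad\le C\iint \Big[a^{1-q}b^{q}\,(u-k)_+^q|D\zeta|^q\zeta^{q-?}+\cdots+(u-k)_+^2|\partial_t\zeta|\Big]\,\dx\dt .
\end{align*}
Young's inequality, splitting $b|Du|^{q-1}(u-k)_+|D\zeta|$ against $a|Du|^p$, is delicate because $q>p$: the power of $|Du|$ on the right exceeds $p$. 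I would instead bound $b|Du|^{q-1}(u-k)_+|D\zeta|$ by
\[
\varepsilon\, a|Du|^{p}+\varepsilon|Du|^{p_\alpha}\text{-type terms}+C\,b^{\beta}+\cdots,
\]
and use the terms with $\mu$ to handle $\mu\le1$. Here the $L^{q_\beta}$ integrability of $Du$ is what makes $b|Du|^{q-1}|\varphi|$ integrable. The term $\rho$ in the final maximum presumably arises from the $\mu$-part and the level normalisation.

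\emph{Sobolev step.} Next I convert the weighted gradient term into an unweighted one. By Hölder's inequality,
\[
\iint |Dw|^{p_\alpha}\le \Big(\iint a^{-\alpha}\Big)^{\frac1{\alpha+1}}\Big(\iint a|Dw|^p\Big)^{\frac{\alpha}{\alpha+1}} .
\]
The parabolic Gagliardo–Nirenberg inequality with the $L^\infty L^2$ bound then gives control of $\iint w^{m}$ with $m=p_\alpha\frac{n+2}{n}$, in the manner of DiBenedetto's embedding $L^\infty L^2\cap L^{p_\alpha}W^{1,p_\alpha}\hookrightarrow L^{p_\alpha(n+2)/n}$ (with the appropriate interpolation exponent; I expect the statement's $m$ to come from here). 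The right-hand side of the Caccioppoli inequality contains $\iint b\,(u-k)_+^{q}\cdots$ terms. Via Hölder with $b\in L^\beta$ these become $(\iint (u-k)_+^{q_\beta})^{1/\beta'}$. Since $q_\beta<m$ exactly by \eqref{gap}, the exponent $p_\alpha\frac{\beta-1}\beta+\frac{2p}{n+2}$-type comparison holds, and I can pass to $L^m$ norms on level sets. I then estimate the measure of $\{u>k\}$ by Chebyshev's inequality.

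\emph{Iteration.} I set $Y_j=\mint_{Q_j}(u-k_j)_+^m$ with $k_j=k(1-2^{-j})$ and radii $\rho_j\downarrow\rho$, $\sigma_j\downarrow\sigma$. Combining the previous steps gives a recursion
\[
Y_{j+1}\le C\,b^j\,k^{-\gamma_1}\,\Lambda(\rho,\sigma,k)\,Y_j^{1+\delta},
\]
with $\delta>0$ guaranteed precisely by \eqref{gap}. The homogeneity mismatch between the $q$-terms, the $p$-terms and the time derivative means $\Lambda$ involves the ratio $\sigma/\rho^{p/(p+1-q)}$ and powers of $k$. The standard fast geometric convergence lemma yields $Y_j\to0$ provided $k$ is at least each of the four quantities in the maximum; I would handle this by distinguishing which term dominates. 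The constants then absorb $\mint a^{-\alpha}$ and $\mint b^\beta$, and their structure gives $\vartheta_i$. The main obstacle, I expect, is the first step: producing a Caccioppoli inequality whose right-hand side involves only $(u-k)_+$ and not $Du$ to power $q$. If the direct Young splitting fails, I would keep a term $\iint b|Du|^{q-1}\ldots$ and bound it using the global $L^{q_\beta}$ gradient norm as a qualitative quantity, then remove it by a Giaquinta-type interpolation/absorption lemma over a family of cylinders, exploiting $q<p+\frac{2}{n+2}$-type gap.
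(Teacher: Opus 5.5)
\textbf{There is a genuine gap, in the step you flagged: you never obtain a usable Caccioppoli inequality.}

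Your diagnosis of the difficulty is mistaken. The term to absorb is $b\,|Du|^{q-1}(u-k)_+|D\zeta|$, and $q-1<p$, because \eqref{gap} forces $q<p+\frac{2}{n+2}$. So Young's inequality with exponents $\frac{p}{q-1}$ and $\frac{p}{p+1-q}$ is admissible.

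Your displayed estimate with $a^{1-q}b^q(u-k)_+^q|D\zeta|^q$ would need Young with exponents $q',q$ and hence $a|Du|^q$ on the left, which \eqref{ip2} does not give. Your fallback also fails. An absorption lemma needs the bad term as $\vartheta\cdot$(left-hand side on a larger cylinder) with $\vartheta<1$. A qualitative $\|Du\|_{L^{q_\beta}}$ cannot remain in a bound that involves only $\mint u_+^m$ and the weights.

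The missing step is as follows.
\begin{itemize}
\item Apply H\"older with the three exponents $\beta$, $\frac{p_\alpha}{q-1}$ and $\gamma=\frac{\beta p_\alpha}{(\beta-1)p_\alpha-\beta(q-1)}$. This bounds the term by $\|b\|_{L^\beta}\|D(u-k)_+\|_{L^{p_\alpha}}^{q-1}\|(u-k)_+D\varphi\|_{L^\gamma}$.
\item Apply Young with $\frac{p}{q-1},\frac{p}{p+1-q}$ against $\|a^{-1}\|_{L^\alpha}^{-1}\|D(u-k)_+\|_{L^{p_\alpha}}^{p}$. This is exactly your own H\"older lower bound for $\int a|D(u-k)_+|^p$.
\item Use Lemma~\ref{lem:Giaq} to absorb the gradient term on the larger cylinder.
\end{itemize}
The resulting right-hand side term is
\[
\Big[\frac{c}{r-\rho}\,\|b\|_{L^\beta}\,\|a^{-1}\|_{L^\alpha}^{\frac{q-1}{p}}\Big(\int_{Q_{r,s}}(u-k)_+^{\gamma}\,\dz\Big)^{\frac1\gamma}\Big]^{\frac{p}{p+1-q}}.
\]
A pointwise Young splitting with the same exponents also works and gives the same $L^\gamma$-norm after H\"older in the weights. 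However, the cut-off must then carry a power at least $\frac{p}{p+1-q}$, which exceeds $q$ when $q>p$, so $(u-k)_+\zeta^q$ is not adequate.

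\textbf{Your later steps are calibrated to the wrong exponent.} The critical exponent is $\gamma$, not $q_\beta$. One has $q_\beta\le\gamma$, and the right-hand inequality in \eqref{gap} is precisely $\gamma<m=\frac{p_\alpha(n+2)}{n}$. The condition $q_\beta<m$ is strictly weaker, so an argument needing only that cannot be right.

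Moreover, the $\gamma$-term enters with the inhomogeneous power $\frac{p}{p+1-q}$. This is what produces the scaling $\sigma\sim\rho^{p/(p+1-q)}$ and the entries of the maximum. Let $\mathbf{A},\mathbf{B}$ denote the mean $L^\alpha$- and $L^\beta$-norms of $a^{-1}$ and $b$. The paper chooses $k$ in \eqref{choice-k} so that the $\gamma$-term dominates:
\begin{itemize}
\item the $\mu$-term, via $k\ge\rho\mu^{p+1-q}/(\mathbf{A}\mathbf{B})$;
\item the time-derivative term, via $k^2/\sigma\le\mathbf{A}^{-1}(\mathbf{A}\mathbf{B}k/\rho)^{p/(p+1-q)}$, which gives the third entry;
\item the lower-order term $\rho^{-p_\alpha}(u-k)_+^{p_\alpha}$ from the cut-off in the Sobolev step, via $(k/\rho)^{q-p}\ge(\mathbf{A}\mathbf{B})^{-1}$, which gives the entry $\rho$.
\end{itemize}
Only then does one get $X_{i+1}\le C\lambda^iX_i^{1+\kappa}$ for $X_i=\mint_{Q_i}(u-k_i)_+^m\,\dz$, with $\kappa=\frac1\gamma\frac{p}{p+1-q}\frac{n+p}{n}\frac{\alpha}{\alpha+1}-1>0$. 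The first entry of the maximum is then the smallness condition of Lemma~\ref{lem:geom-conv}. Your plan of ``distinguishing which term dominates'' points this way, but cannot be carried out without the correct energy estimate.
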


The proof of Theorem~\ref{thm:subsolution} is contained in Section~\ref{sec:subsolution}. In addition,
in Section~\ref{sec:subsolution} we consider slightly more general conditions that also
include solutions to regularized problems, see Proposition~\ref{prop:apriori}.

\begin{rem}\label{rem:sup-est}
  The estimate from the above theorem suggests that it is natural to consider parabolic cylinders of the form $Q=Q_{\rho,\sigma}(z_o)$ with $\sigma=\rho^{\frac{p}{p+1-q}}$. In fact, for a cylinder of this type with $\rho\le1$, the preceding estimate simplifies to 
  \begin{align*} 
    \esssup_{\frac12 Q}u
    \le
    \mathrm{C}_{a,b}\max\left\{\frac{1}{\rho^{(q-p)\vartheta_2}}\bigg(\mint_{Q} u_+^m\,\d z\bigg)^{\vartheta_3},\bigg(\mint_{Q} u_+^m\,\d z\bigg)^{\frac1m},1\right\}.
  \end{align*}
  Note that the right-hand side is finite due to Sobolev's embedding, cf. Lemma~\ref{lem:inter}.
\end{rem}

We conclude this subsection with a brief outline of the proof and some comments on the challenges that arise in the process.
The first step in the proof is to derive a Caccioppoli type inequality for the truncated functions $(u-k)_+$ with levels $k\in\R$, see Lemma~\ref{lem:caccioppoli}. Formally, this is done by testing the definition of the subsolution with the test function $(u-k)_+\Phi$ for a suitable cut-off function $\Phi\in C^\infty_0(\Omega_T)$. However, it is not possible to use this function directly as a test function, because the subsolution might not have the required regularity with respect to the time variable. Therefore, we have to use time mollifications of the subsolution to construct an admissible test function. When passing to the limit with the mollification parameter, we rely on the assumption $Du\in L^{q_\beta}(\Omega_T)$ to guarantee that the resulting integrals converge. In fact, it is not enough to assume that $b|Du|^q\in L^1(\Omega_T)$, as the growth assumption \eqref{ip1} suggests, because this property might not be preserved under the time mollification. This is the point in the proof at which we have to restrict ourselves to subsolutions in the energy space $L^{q_\beta}(0,T;W^{1,q_\beta}(\Omega))$. 

The resulting Caccioppoli inequality is only useful for our purposes as long as the gap between $p$ and $q$ is not too large. More precisely, the exponents of $(u-k)_+$ appearing on the right-hand side of the Caccioppoli inequality need to be strictly less than the integrability exponent $m=\frac{p_\alpha(n+2)}{n}$ provided by Sobolev's embedding, cf. Lemma~\ref{lem:inter}. This requirement leads to assumption~\eqref{gap} on the exponents $p$ and $q$, cf. Remark~\ref{rem:gap}.

Having established the Caccioppoli inequality, we prove the claimed local upper bound by setting up a De Giorgi iteration scheme. As usual in the case of nonlinear parabolic problems, a challenge arises from the different scaling behaviour of the time derivative and the diffusion term in the parabolic equation. This problem is already present in the case of the parabolic $p$-Laplace equation, see e.g. \cite{DiBenedettobook}. Under the much more general assumptions considered here, the situation becomes significantly more involved. On a technical level, we need to balance the various terms appearing on the right-hand side of the Caccioppoli inequality. This is accomplished by an intricate choice of the levels used in the De Giorgi iteration according to \eqref{choice-k}. This choice compensates for the inhomogeneous form of the Caccioppoli inequality and makes it possible to prove the claimed local upper bound for weak subsolutions.

\subsection{Existence of locally bounded variational solutions}

For our second main result, we apply the local bounds stated above
to prove existence of locally bounded
solutions to Cauchy-Dirichlet problems. For the existence result, we
restrict ourselves to parabolic equations with variational structure,
for which the concept of variational solutions is available. More
precisely, we consider Cauchy-Dirichlet problems of the type
\begin{equation}\label{Cauchy-Dirichlet-variational}
  \left\{\begin{array}{cl}
       \partial_tu-\dive D_\xi f(x,t,Du)=0 &\mbox{in }\Omega_T,  \\[0.8ex]
       u=g&\mbox{on }\partial_{\mathrm{par}}\Omega_T, 
  \end{array}\right.    
\end{equation} 
for an integrand $f(x,t,\xi):\Omega_T\times\R^n\to\R$ that is convex
and continuously differentiable with
respect to the $\xi$-variable and satisfies
\begin{align}
  \label{fip0}
0\le f(x,t,\xi)&\le b(x,t)(\mu^2+|\xi|^2)^{\frac{q}{2}},\\[0.8ex]
  \label{fip1}
|D_\xi f(x,t,\xi)|&\le b(x,t)(\mu^2+|\xi|^2)^{\frac{q-1}{2}},\\[0.8ex]
\label{fip2}
\langle D_\xi f(x,t,\xi),\xi\rangle&\ge a(x,t)(\mu^2+|\xi|^2)^{\frac {p-2}{2}}|\xi|^2,
\end{align}
for a.e. $(x,t)\in\Omega_T$ and every $\xi\in\R^n$, 
exponents $2\le p\le q$ and parameters  $\mu\in[0,1]$, with functions
$a,b:\Omega_T\to[0,\infty)$ satisfying \eqref{summab} and \eqref{gap}.
For the boundary and initial values, we assume
\begin{equation}
    \label{lateral-condition-f}
    g\in L^{\gamma}(0,T;W^{1,\gamma}(\Omega))
    \qquad\mbox{and}\qquad
    \partial_t g\in L^{p_\alpha'}(0,T;W^{-1,p_\alpha'}(\Omega)),
\end{equation}
where
\begin{equation}
\label{def:gamma}
      \gamma=\frac{\beta p_\alpha}{(\beta-1)p_\alpha -\beta(q-1)},
\end{equation}
and
\begin{equation}
    \label{initial-condition}
    g(\cdot,0)\in L^2(\Omega).
\end{equation}

Next, we specify the notion of variational solution that we rely on for the
existence result.
The concept of variational solutions goes back to an idea by Lichnewsky and Temam \cite{Lichnewsky-Temam:1978}.
In the context of parabolic equations with $p,q$-growth, a related
notion has been used, for instance, in \cite{BDM:pq,BDM:variational}.

\begin{defn}\label{def:var-sol}
  Suppose that $f:\Omega_T\times\R^{n}\to\R$ is an integrand satisfying
  \eqref{fip0}, and the initial and boundary data $g$
  satisfies~\eqref{lateral-condition-f}
  and~\eqref{initial-condition}.
  Then we say that a map
  \begin{equation*}
    u\in \big(g+L^{p_\alpha}(0,T;W^{1,p_\alpha}_0(\Omega))\big)\cap C_w([0,T];L^2(\Omega))
  \end{equation*}
  is a \emph{variational solution} of the
  Cauchy-Dirichlet-problem~\eqref{Cauchy-Dirichlet-variational} if and only if
  $u(\cdot,0)=g(\cdot,0)$ and for all comparison maps $v\in
  g+L^{q_\beta}(0,T;W^{1,q_\beta}_0(\Omega))$
  with $\partial_tv\in
  L^{p_\alpha'}(0,T;W^{-1,p_\alpha'}(\Omega))$ we have
  \begin{align}\label{def:var-inequ}
    \int_{\Omega_\tau}f(x,t,Du)\dx\dt
    &\le
    \int_{\Omega_\tau}f(x,t,Dv)\dx\dt
    +
    \int_0^\tau \langle \partial_tv,v-u\rangle_{W^{-1,p_\alpha'}}\,\dt\cr
    &\qquad
    -\frac12\int_\Omega|(v-u)(\tau)|^2\dx
    +\frac12\int_\Omega|(v-g)(0)|^2\dx,
  \end{align}
  for every $\tau\in(0,T]$, where
  $\langle\cdot,\cdot\rangle_{W^{-1,p_\alpha'}}$ denotes the duality pairing
  between $W^{-1,p_\alpha'}(\Omega)$ and $W^{1,p_\alpha}_0(\Omega)$. 
\end{defn}

Under the above-mentioned assumptions, we prove that there exists a
variational solution to~\eqref{Cauchy-Dirichlet-variational} that is locally
bounded. More precisely, our second main result is the following. 
\begin{thm}\label{thm:exist}
  Let $f:\Omega_T\times\R^n\to\R^n$ be a measurable function
  satisfying~\eqref{fip0} -- \eqref{fip2}, \eqref{summab} and
  \eqref{gap}, for which the partial map $\xi\mapsto
  f(x,t,\xi)$ is convex and continuously differentiable. For the boundary and initial values we
  assume~\eqref{lateral-condition-f} and \eqref{initial-condition}.
  Then there exists a locally bounded
  variational solution $u$ of the Cauchy-Dirichlet
  problem~\eqref{Cauchy-Dirichlet-variational} in the sense of
  Definition~\ref{def:var-sol}. More precisely, for every compact
  subset $K\subset\Omega\times(0,T]$, the solution satisfies
  \begin{equation}\label{local-bound-u}
    \esssup_{K}|u|\le C_K\Big(1+\|\partial_t
    g\|^{p'}_{L^{p_{\alpha}'}-W^{-1,p_{\alpha}'}}
    +\|g\|_{L^{p_\alpha}-W^{1,p_\alpha}}^p
    +\|Dg\|^{\frac{p}{p+1-q}}_{L^\gamma}
   +\mu^{q-1}\|Dg\|_{L^{\beta'}}
    +\|g\|_{L^2-L^\infty}^2\Big)^\vartheta,
  \end{equation}
  with a positive exponent $\vartheta$ 
  that depends on the data $n,p,q,\alpha,\beta$ and a constant $C_K$ that depends on the same
  data and on $\|a^{-1}\|_{L^\alpha}$, $\|b\|_{L^\beta}$, $\mathrm{diam}(\Omega)$ and the parabolic distance $\dist_{\mathrm{par}}^{(p,q)}(K,\partial_{\mathrm{par}}\Omega_T)$
  defined in~\eqref{pq-dist}.

\end{thm}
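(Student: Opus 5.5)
The plan is to obtain the solution as a limit of solutions to regularized problems, for which the local bound of Proposition~\ref{prop:apriori} (the version of Theorem~\ref{thm:subsolution} covering regularized problems) applies uniformly in the regularization parameter.

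\emph{Step 1: regularization.} For $\eps\in(0,1]$ we truncate the coefficients and add a $q_\beta$-growth term. Concretely, we set
\[
f_\eps(x,t,\xi):=f_{(\eps)}(x,t,\xi)+\eps(\mu^2+|\xi|^2)^{q_\beta/2},
\]
where $f_{(\eps)}$ is built from $f$ so that the coefficient functions become bounded and bounded away from zero. One way is to replace $a$ by $\max\{a,\eps\}$ via the added convex term, and to cap $b$ by $1/\eps$, for instance using $\min\{1,\eps^{-1}b^{-1}\}f$. The requirement is that the structure conditions \eqref{fip0}--\eqref{fip2} hold with $a_\eps\ge a$ and $b_\eps\le b+\eps$, uniformly in $\eps$, so that $\|a_\eps^{-1}\|_{L^\alpha}$ and $\|b_\eps\|_{L^\beta}$ remain controlled. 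Since $f_\eps$ has standard $q_\beta$-growth, classical monotone operator theory (Lions) gives a weak solution $u_\eps\in g_\eps+L^{q_\beta}(0,T;W^{1,q_\beta}_0)$ with $\partial_t u_\eps$ in the dual space. Here $g_\eps$ is a smooth approximation of $g$ in the norms of \eqref{lateral-condition-f}. By convexity, $u_\eps$ is also a variational solution of the regularized problem.

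\emph{Step 2: uniform energy bounds.} We test with $u_\eps-g_\eps$ and use \eqref{fip1}, \eqref{fip2}. The coercive side gives $\int a|Du_\eps|^p$, and Hölder with $a^{-1}\in L^\alpha$ converts this into control of $\|Du_\eps\|_{L^{p_\alpha}}^p$. The term $\int b(\mu^2+|Du_\eps|^2)^{(q-1)/2}|Dg_\eps|$ is estimated by Hölder with exponents $\beta$, $p_\alpha/(q-1)$ and $\gamma$; the definition \eqref{def:gamma} of $\gamma$ is chosen exactly so that these exponents are conjugate. Since $q-1<p$ by \eqref{gap}, Young's inequality absorbs this term and produces $\|Dg\|_{L^\gamma}^{p/(p+1-q)}$ together with $\mu^{q-1}\|Dg\|_{L^{\beta'}}$. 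The time derivative part $\langle\partial_t g,u_\eps-g\rangle$ yields $\|\partial_t g\|^{p'}$. Altogether we obtain uniform bounds in $L^\infty(L^2)\cap L^{p_\alpha}(W^{1,p_\alpha})$ in terms of the quantity in the bracket of \eqref{local-bound-u}.

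\emph{Step 3: uniform local $L^\infty$ bound.} Each $u_\eps$ lies in the energy space $L^{q_\beta}(W^{1,q_\beta})$, and both $u_\eps$ and $-u_\eps$ are weak subsolutions of equations of the same type. Proposition~\ref{prop:apriori} therefore gives a bound on $\esssup|u_\eps|$ over cylinders of the form $Q_{\rho,\rho^{p/(p+1-q)}}$, by Remark~\ref{rem:sup-est}, covering $K$ at the scale fixed by $\dist^{(p,q)}_{\mathrm{par}}$. The input is $\mint u_{\eps,\pm}^m$ with $m=p_\alpha(n+2)/n$, which Lemma~\ref{lem:inter} (parabolic Sobolev) bounds by the Step 2 energy. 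The $L^\infty(L^2)$ bound requires $\|g\|_{L^2-L^\infty}$, which is why that term appears in \eqref{local-bound-u}. These bounds are uniform in $\eps$ because the constants depend only on the coefficient norms.

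\emph{Step 4: passage to the limit.} We extract a subsequence with $u_\eps\wsto u$ weakly-$*$ in $L^\infty(L^2)$ and weakly in $L^{p_\alpha}(W^{1,p_\alpha})$. The local bound passes to the limit by lower semicontinuity. To verify \eqref{def:var-inequ}, we write the variational inequality for $u_\eps$ with a comparison map $v$ (since $v\in L^{q_\beta}$, the term $\int f_\eps(Dv)\to\int f(Dv)$), use convexity for lower semicontinuity of $\int f(Du_\eps)$, and use weak lower semicontinuity of $\int|(v-u_\eps)(\tau)|^2$. The pairing $\langle\partial_tv,v-u_\eps\rangle$ is linear in $u_\eps$ and converges.

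\emph{Main obstacle.} The hardest parts are the attainment of the initial datum, $u(\cdot,0)=g(\cdot,0)$, and the membership $u\in C_w([0,T];L^2)$. No bound on $\partial_t u_\eps$ is available, so I would handle these through the variational inequality itself: take $v=g$ with $\tau\downarrow0$ to get $\|(u-g)(\tau)\|_{L^2}\to0$, as is done in \cite{BDM:variational}. Then $u$ is redefined on a null set of times to be weakly continuous. Care is also needed to ensure that the inequality holds for every $\tau$ and not just almost every $\tau$.
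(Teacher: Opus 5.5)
Your overall architecture (regularize, uniform energy bounds, uniform local sup-bounds from Proposition~\ref{prop:apriori}, limit passage by convexity and lower semicontinuity) matches the paper's. However, the step you single out as the main obstacle rests on a false premise and has no working argument. It is not true that no bound on $\partial_t u_\eps$ is available: the approximating equations give one in a weak dual space, and this is exactly the missing ingredient. The paper tests the equation for $u_i$ with $\varphi\in C^\infty_0(\Omega_T)$ and uses \eqref{feps1}, H\"older's inequality (possible because \eqref{gap} gives $q-1<p_\alpha\frac{\beta-1}{\beta}$) and the energy bound \eqref{energy-bound}, which also controls $\eps_i\int_{\Omega_T}|Du_i|^{q_\beta}\,\dz$. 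Choosing $\varphi=\chi_\delta(t)\psi(x)$, it obtains
\[
\bigg|\int_\Omega\big(u_i(x,t_2)-u_i(x,t_1)\big)\psi(x)\,\dx\bigg|\le C\Big(|t_2-t_1|^{\frac{\beta-1}{\beta}-\frac{q-1}{p_\alpha}}+|t_2-t_1|^{\frac{1}{q_\beta}}\Big)\|D\psi\|_{L^\infty(\Omega)}
\]
uniformly in $i$. Combined with the uniform $L^\infty(0,T;L^2(\Omega))$ bound and an Arzel\`a--Ascoli type compactness result, this gives $u_i(\cdot,t)\wto u(\cdot,t)$ weakly in $L^2(\Omega)$ for \emph{every} $t\in[0,T]$, and $u\in C_w([0,T];L^2(\Omega))$. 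Then $u(\cdot,0)=g(\cdot,0)$ is immediate, since $u_i(\cdot,0)=g(\cdot,0)$. So is the lower semicontinuity $\int_\Omega|(v-u)(\tau)|^2\dx\le\liminf_i\int_\Omega|(v-u_i)(\tau)|^2\dx$ at each fixed $\tau$. Your Step~4 uses this inequality, but it does not follow from weak-$*$ convergence in $L^\infty(0,T;L^2(\Omega))$, which says nothing about individual time slices.

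Your proposed substitute does not fill this hole. Getting time continuity from the variational inequality alone requires comparison maps built from $u$ itself, e.g.\ $\mollifytime{u}{h}$, and these are not admissible here. Comparison maps must lie in $g+L^{q_\beta}(0,T;W^{1,q_\beta}_0(\Omega))$, while you only know $Du\in L^{p_\alpha}$. Moreover, because $b$ depends on $t$, you cannot even bound $\int f(x,t,D\mollifytime{u}{h})\,\dz$ via Jensen; this is the obstruction the paper points out for the Caccioppoli inequality. So ``redefine $u$ on a null set of times'' has nothing to rest on, and the $v=g$ argument for the initial datum only works once $C_w$ and the inequality for (almost) every $\tau$ are available.

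Two smaller points. First, when you invoke Proposition~\ref{prop:apriori} you must check the smallness condition \eqref{assumption-eps}. Its exponent $\frac{p}{p+1-q}-q_\beta$ can have either sign, so you need the two-sided bound $\mathrm{C}_{a,b}\le k_i\le C_K(\cdots)^\vartheta$ together with $\eps_i\to0$. Second, your truncation is unnecessary and, as described, inconsistent. The factor $\min\{1,(\eps b)^{-1}\}f$ also lowers the ellipticity coefficient, so $a_\eps\ge a$ fails. Replacing $g$ by $g_\eps$ makes $v-u_\eps$ inadmissible for $v\in g+L^{q_\beta}(0,T;W^{1,q_\beta}_0(\Omega))$. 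The paper simply uses $f+\eps_i|\xi|^{q_\beta}$ with the original $g$, which already has standard $q_\beta$-growth because $b\in L^\beta$.
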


For the proof, we refer to Section~\ref{sec:exist}. At this point, we only briefly sketch the strategy of the proof. 
For the construction of the asserted  variational solutions, we consider regularized problems with standard $q_\beta$-growth, which have weak solutions in the energy space $L^{q_\beta}(0,T;W^{1,q_\beta}(\Omega))$. This regularity allows us to apply the local bounds stated in Theorem~\ref{thm:subsolution}, which also hold uniformly for the approximating solutions. Moreover, for Dirichlet data satisfying~\eqref{lateral-condition-f}, we derive an energy estimate that ensures that the approximating sequence is bounded in the space $L^{p_\alpha}(0,T;W^{1,p_\alpha}(\Omega))$. At this point, we exploit the variational structure of equation~\eqref{Cauchy-Dirichlet-variational} to pass to the limit. In fact, weak convergence is sufficient to pass to the limit in the variational inequality~\eqref{def:var-inequ} by using the lower semicontinuity of convex functionals. In this way we can verify that the limit of the approximating solutions is indeed a variational solution. Because the approximating solutions are uniformly bounded in the interior of the domain, this procedure yields a variational solution that is locally bounded.

It is worth pointing out that our bound in \eqref{gap} in case $\alpha=\beta=\infty$ doesn't give back that in \cite{singer2}, where  the existence of a bounded variational solution has been proven in the non degenerate context. This is due to the fact that our result relies on a general sup-bound for weak subsolutions instead of a bound specific for the variational ones.

\section{Preliminaries}\label{sec:preliminaries}

\subsection{Notation}

For an open bounded set $\Omega\subset\R^n$ and $T>0$ we consider the space-time
cylinder $\Omega_T:=\Omega\times(0,T)$. The parabolic boundary of $\Omega_T$ is given by 
$$
    \partial_{\mathrm{par}} \Omega_T
    :=
    \big(\overline \Omega\times\{0\}\big) \cup \big(\partial \Omega\times(0,T]\big).
$$

For points in space-time, we will write $z=(x,t)$ or $z_o=(x_o,t_o)$,
where $x,x_o\in\R^n$ denote spatial variables and $t_o,t\in\R$
instants in time. 
For backward parabolic cylinders, we use the notation
\begin{equation*}
  Q_{\rho,\sigma}(z_o):= B_\rho(x_o)\times(t_o-\sigma,t_o)
\end{equation*}
for $\rho,\sigma>0$, where $B_\rho(x_o)\subset\R^n$ denotes the open
ball with radius $\rho>0$ and center $x_o\in\R^n$.
The above cylinders with $\sigma=\rho^{\frac{p}{p+1-q}}$ best reflect
the scaling behaviour of parabolic problems with $p,q$-growth as we consider them here. For these cylinders we use the abbreviation
\begin{equation}\label{pq-cyl}
  Q_\rho^{(p,q)}(z_o)
  :=
  B_\rho(x_o)\times\big(t_o-\rho^{\frac{p}{p+1-q}},t_o\big).
\end{equation}

The associated parabolic distance adapted to $p,q$-growth is given by 
$$
    d_{\mathrm{par}}^{(p,q)}(z_1,z_2)
    :=
    |x_1-x_2| + |t_1-t_2|^{\frac{p+1-q}{p}},
$$
for two points $z_1=(x_1,t_1), z_2=(x_2,t_2)\in\R^{n+1}$.
The corresponding distance of two subsets $K_1,K_2\subset \Omega_T$ is

\begin{equation}
    \dist_\mathrm{par}^{(p,q)}(K_1,K_2)
    :=
    \inf\big\{ d_\mathrm{par}^{(p,q)}(z_1,z_2)\colon z_1\in
    K_1,\,z_2\in K_2\big\}.\label{pq-dist}
\end{equation}

\subsection{A time mollification}

For $v \in L^1(\Omega_T)$ and $h>0$, we define a mollification in time by
\begin{equation}\label{time-molli}
\mollifytime{v}{h}(x,t)
	:= \frac 1 h \int_t^T e^{\frac{t-s}{h}} v(x,s) \mathrm{d}s.
\end{equation}

We state some well-known properties of this mollification in the
following lemma. For the proofs, we refer to~\cite[Appendix B]{BDM:pq}.

\begin{lem} \label{lem:mollifier}
  Let $v$ and $\mollifytime{v}{h}$ be as above. Then the following
  properties hold:
  \begin{enumerate}
  \item If $v \in L^p(\Omega_T)$ for some $p \geq 1$, then
  \begin{equation*}
    \| \mollifytime{v}{h} \|_{L^p(\Omega_T,\R^N)} \leq \| v \|_{L^p(\Omega_T,\R^N)},
  \end{equation*}
  and $\mollifytime{v}{h} \to v$ in $L^p(\Omega_T)$ as $h \to 0$.
\item Let $v\in L^p(0,T; W^{1,p}(\Omega))$ for some
  $p \geq 1$. Then
  \begin{equation*}
    \| \mollifytime{v}{h}\|_{L^p(0,T;W^{1,p}(\Omega))} \leq \| v\|_{L^p(0,T;W^{1,p}(\Omega))} 
  \end{equation*}
  and $\mollifytime{v}{h}\to v$ in $L^p(0,T;W^{1,p}(\Omega))$ as
  $h \to 0$.
\item If $v \in L^p(0,T; W_0^{1,p}(\Omega))$, then
  $\mollifytime{v}{h} \in L^p(0,T;W_0^{1,p}(\Omega))$. 
\item The weak time derivative $\partial_t \mollifytime{v}{h}$
  exists in $\Omega_T$ and is given by 
  \begin{equation*}
    \partial_t \mollifytime{v}{h} = \frac{1}{h} (\mollifytime{v}{h} -v).
  \end{equation*}
\end{enumerate}
\end{lem}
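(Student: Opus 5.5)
The statement immediately above is Lemma~\ref{lem:mollifier}, which collects standard properties of the exponential time mollification $\mollifytime{v}{h}$. I will verify each item directly from the definition \eqref{time-molli}. The key observation is that $\mollifytime{v}{h}(x,t)=\int_0^{T-t}\frac1h e^{-s/h}v(x,t+s)\,\mathrm{d}s$. Thus it is an average of forward time-translates of $v$ against the kernel $\frac1h e^{-s/h}$ on $(0,\infty)$, whose total mass is at most $1$.

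For item (1), I apply Minkowski's integral inequality (or Jensen's inequality with the sub-probability measure $\frac1h e^{-s/h}\,\mathrm{d}s$). This gives $\|\mollifytime{v}{h}\|_{L^p}\le \int_0^\infty \frac1h e^{-s/h}\|v(\cdot,\cdot+s)\chi_{\{t+s<T\}}\|_{L^p}\,\mathrm{d}s\le\|v\|_{L^p}$. For the convergence, I use a density argument. For $v\in C(\overline{\Omega_T})$ the convergence $\mollifytime{v}{h}\to v$ holds locally uniformly on $\Omega\times[0,T)$, with uniformly bounded values. Indeed, $\mollifytime{v}{h}(x,t)-v(x,t)=\int_0^{(T-t)/h}e^{-\sigma}\big(v(x,t+h\sigma)-v(x,t)\big)\,\mathrm{d}\sigma - e^{-(T-t)/h}v(x,t)$. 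Dominated convergence then gives $L^p$ convergence. For general $v$, I approximate by continuous $w$ and use linearity together with the uniform bound: $\|\mollifytime{v}{h}-v\|\le 2\|v-w\|+\|\mollifytime{w}{h}-w\|$.

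Item (2) follows from the same argument applied to $Dv$, once I note that $D\mollifytime{v}{h}=\mollifytime{Dv}{h}$. This commutation I check by testing against $\varphi\in C_0^\infty(\Omega)$ and applying Fubini. Item (3) holds because $\mollifytime{v}{h}(\cdot,t)$ is a Bochner integral of $W^{1,p}_0(\Omega)$-valued functions, and $W^{1,p}_0(\Omega)$ is a closed subspace. For item (4), I differentiate $\frac1h e^{t/h}\int_t^T e^{-s/h}v(x,s)\,\mathrm{d}s$ in $t$, for instance in the weak sense by testing with $\varphi\in C_0^\infty(\Omega_T)$ and using Fubini. The product rule yields $\frac1h\mollifytime{v}{h}-\frac1h v$, as claimed.

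The only mildly delicate step is the convergence in (1). The endpoint term $e^{-(T-t)/h}v$ does not vanish uniformly near $t=T$. However, its $L^p$ norm tends to zero by dominated convergence, so this causes no real difficulty. Everything else is routine.
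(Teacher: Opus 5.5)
Your proposal is correct. The paper does not prove this lemma itself but refers to \cite[Appendix B]{BDM:pq}, and your argument is the standard one for these facts: Minkowski/Jensen for the $L^p$ bound, approximation for the convergence, $D\mollifytime{v}{h}=\mollifytime{Dv}{h}$ via Fubini, closedness of $W^{1,p}_0(\Omega)$, and the product rule for $\partial_t\mollifytime{v}{h}$. One caveat: the convergence claims in (1)--(2) only hold for $1\le p<\infty$, since they fail for $p=\infty$. Your density argument implicitly assumes this, and finite $p$ is all the paper uses.
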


\subsection{An iteration lemma}
  We will rely on the following well-known lemma to absorb
  certain terms into the left-hand side of an estimate. 
  The proof is analogous to the proof of~\cite[ Lemma 6.1]{Giusti}.
   \begin{lem}\label{lem:Giaq}For $0<\rho<\sigma$,
     assume that $f:[\rho,\sigma]\to[0,\infty)$ is a bounded function that satisfies
     \begin{equation*}
       f(\rho')\le\vartheta f(\sigma')+\frac A{(\rho'-\sigma')^\alpha}
                   +\frac B{(\rho'-\sigma')^\beta}+\frac C{(\rho'-\sigma')^\gamma}+D
     \end{equation*}
     for all $\rho<\rho'<\sigma'<\sigma$, where 
      $A,B,C,D\ge0$, $\alpha\ge\beta\ge\gamma\ge 0$
     and $\vartheta\in(0,1)$ are fixed parameters. Then we have the estimate
     \begin{equation*}
       f(\rho)\le c(\alpha,\vartheta)\bigg(\frac A{(\sigma-\rho)^\alpha}+\frac B{(\sigma-\rho)^\beta}+\frac C{(\sigma-\rho)^\gamma}+D\bigg).
     \end{equation*}
   \end{lem}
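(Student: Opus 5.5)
The plan is the standard geometric iteration from \cite[Lemma 6.1]{Giusti}. I read the denominators in the hypothesis as $(\sigma'-\rho')$, since $\rho'<\sigma'$ and the powers must be of positive quantities.

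\textbf{Step 1: choice of radii.} Fix $\lambda\in(0,1)$ depending only on $\alpha$ and $\vartheta$ such that $\vartheta\lambda^{-\alpha}<1$; for instance $\lambda^{\alpha}=\frac{1+\vartheta}{2}$. Define
\begin{equation*}
  \rho_0:=\rho,\qquad \rho_{i+1}:=\rho_i+(1-\lambda)\lambda^i(\sigma-\rho),\qquad i\ge0 .
\end{equation*}
Then $\rho_i$ increases to $\sigma$ and $\rho_{i+1}-\rho_i=(1-\lambda)\lambda^i(\sigma-\rho)$. Apply the hypothesis with $\rho'=\rho_i$ and $\sigma'=\rho_{i+1}$. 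If the hypothesis is only available for $\sigma'<\sigma$ strictly, this is fine, since every $\rho_{i+1}<\sigma$.

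\textbf{Step 2: iterate.} By induction, for every $k\in\N$,
\begin{equation*}
  f(\rho)\le \vartheta^k f(\rho_k)+\sum_{i=0}^{k-1}\vartheta^i\Big(\frac{A}{((1-\lambda)\lambda^i(\sigma-\rho))^{\alpha}}+\frac{B}{((1-\lambda)\lambda^i(\sigma-\rho))^{\beta}}+\frac{C}{((1-\lambda)\lambda^i(\sigma-\rho))^{\gamma}}+D\Big).
\end{equation*}

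\textbf{Step 3: sum the series.} Since $\lambda,1-\lambda\in(0,1)$ and $\alpha\ge\beta\ge\gamma\ge0$, we have
\begin{equation*}
  \lambda^{-i\beta},\ \lambda^{-i\gamma}\le\lambda^{-i\alpha}
  \qquad\mbox{and}\qquad
  (1-\lambda)^{-\beta},\ (1-\lambda)^{-\gamma}\le(1-\lambda)^{-\alpha}.
\end{equation*}
Hence the $i$-th summand is bounded by
\begin{equation*}
  (\vartheta\lambda^{-\alpha})^i(1-\lambda)^{-\alpha}\Big(\frac{A}{(\sigma-\rho)^\alpha}+\frac{B}{(\sigma-\rho)^\beta}+\frac{C}{(\sigma-\rho)^\gamma}\Big)+\vartheta^iD .
\end{equation*}
Both $\sum_i(\vartheta\lambda^{-\alpha})^i$ and $\sum_i\vartheta^i$ converge, with sums depending only on $\alpha$ and $\vartheta$.

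\textbf{Step 4: pass to the limit.} Since $f$ is bounded on $[\rho,\sigma]$ and $\vartheta<1$, we have $\vartheta^kf(\rho_k)\to0$ as $k\to\infty$. This yields the claim with
\begin{equation*}
  c(\alpha,\vartheta)=\max\Big\{\frac{(1-\lambda)^{-\alpha}}{1-\vartheta\lambda^{-\alpha}},\ \frac1{1-\vartheta}\Big\}.
\end{equation*}

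No step is a real obstacle. The only point needing care is choosing $\lambda$ from the largest exponent $\alpha$, so that a single comparison controls all three power terms and the constant depends only on $\alpha$ and $\vartheta$.
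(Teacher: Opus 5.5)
Your proof is correct and is the same geometric iteration as \cite[Lemma 6.1]{Giusti}, to which the paper defers instead of giving a proof. One point needs the same care you gave the denominators. Step 1 applies the hypothesis at $\rho'=\rho_0=\rho$, but the literal condition $\rho<\rho'$ excludes that point. Read literally, the lemma is in fact false: take $f(\rho)=1$, $f\equiv0$ on $(\rho,\sigma]$ and $A=B=C=D=0$. So you should state that you read the hypothesis for $\rho\le\rho'<\sigma'<\sigma$, as in Giusti's version; this is also what is available in the application in Lemma~\ref{lem:caccioppoli}. A minor remark: your sample choice $\lambda^\alpha=\frac{1+\vartheta}{2}$ is impossible when $\alpha=0$, which the statement permits, but in that case any $\lambda\in(0,1)$ works.
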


\subsection{An interpolation type inequality}

\noindent For further needs, we also record the following
		interpolation inequality whose proof can be found in \cite[Chapter~I, Proposition~3.1]{DiBenedettobook}.

\begin{lem}
\noindent \label{lem:inter} Assume that the function $v:Q_{r,s}(z_{0})\rightarrow\mathbb{R}$ satisfies
$$v\in
L^{\infty}\big(t_{0}-s,t_{0};L^{2}\left(B_{r}\left(x_{0}\right)\right)\big)\cap
L^{p_\alpha}\big(t_{0}-s,t_{0};W^{1,p_\alpha}_0\left(B_{r}\left(x_{0}\right)\big)\right)$$
for an exponent $p_\alpha:=\frac{p\alpha}{\alpha+1}\ge 1$.
Then the following estimate
\[
  \mint_{Q_{r,s}(z_{0})}\left|v\right|^{\frac{p_\alpha(n+2)}{n}}\,\dz\leq c\left(\sup_{t\in(t_{0}-s,t_{0})}\mint_{B_{r}(x_{0})}\left|v(x,t)\right|^{2}\,\dx\right)^{\frac{p_\alpha}{n}}r^{p_\alpha}\mint_{Q_{r,s}(z_{0})}\left|Dv\right|^{p_\alpha}\,\dz
\]
holds true for a positive constant $c$ depending at most on $n$,
$p$ and $\alpha$.
\end{lem}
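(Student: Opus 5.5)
The approach is to prove the inequality slice by slice in time, using a Gagliardo--Nirenberg type inequality for functions with zero trace, and then integrate over $(t_0-s,t_0)$, pulling out the supremum of the $L^2$-norms. Throughout, abbreviate $m:=\frac{p_\alpha(n+2)}{n}$ and note the splitting $m=p_\alpha+\frac{2p_\alpha}{n}$.

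\emph{Step 1 (fixed time slice, case $p_\alpha<n$).} For a.e.\ $t$ we have $v(\cdot,t)\in W^{1,p_\alpha}_0(B_r(x_0))$, which we extend by zero to $\R^n$. Write $|v|^m=|v|^{p_\alpha}\,|v|^{2p_\alpha/n}$ and apply H\"older's inequality in space with exponents $\frac{n}{n-p_\alpha}$ and $\frac{n}{p_\alpha}$:
\begin{equation*}
  \int_{B_r}|v|^m\,\dx\le\Big(\int_{B_r}|v|^{p_\alpha^*}\,\dx\Big)^{\frac{n-p_\alpha}{n}}\Big(\int_{B_r}|v|^2\,\dx\Big)^{\frac{p_\alpha}{n}},
  \qquad p_\alpha^*=\frac{np_\alpha}{n-p_\alpha}.
\end{equation*}
The first factor equals $\|v\|_{L^{p_\alpha^*}}^{p_\alpha}$. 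By the Sobolev inequality for zero-trace functions, it is bounded by $c(n,p_\alpha)\int_{B_r}|Dv|^{p_\alpha}\,\dx$, with a constant independent of $r$.

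\emph{Step 1 (case $p_\alpha\ge n$).} In this case the pure Sobolev exponent is unavailable, and this is the only mildly delicate point. Here I would instead use the Gagliardo--Nirenberg inequality on $\R^n$ for the zero extension:
\begin{equation*}
  \|v\|_{L^m}\le c\,\|Dv\|_{L^{p_\alpha}}^{\theta}\|v\|_{L^2}^{1-\theta},
  \qquad \theta=\frac{p_\alpha}{m}.
\end{equation*}
One checks that this $\theta$ satisfies the scaling relation
\begin{equation*}
  \frac1m=\theta\Big(\frac1{p_\alpha}-\frac1n\Big)+\frac{1-\theta}{2},
\end{equation*}
which is equivalent to $m-p_\alpha=\frac{2p_\alpha}{n}$. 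Hence $\theta\in(0,1]$ is admissible, and the inequality again yields $\int|v|^m\le c\int|Dv|^{p_\alpha}\,(\int|v|^2)^{p_\alpha/n}$. The constant is scale invariant because the inequality is dimensionally balanced.

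\emph{Step 2 (integration in time and normalization).} Integrating the slice estimate over $t\in(t_0-s,t_0)$ and estimating $\int_{B_r}|v(\cdot,t)|^2\,\dx$ by its supremum in $t$ gives
\begin{equation*}
  \int_{Q_{r,s}}|v|^m\,\dz\le c\Big(\sup_t\int_{B_r}|v(\cdot,t)|^2\,\dx\Big)^{\frac{p_\alpha}{n}}\int_{Q_{r,s}}|Dv|^{p_\alpha}\,\dz.
\end{equation*}
Finally divide by $|Q_{r,s}|=|B_r|\,s$. Write $\int_{B_r}|v|^2\,\dx=|B_r|\mint_{B_r}|v|^2\,\dx$, and use $|B_r|^{p_\alpha/n}=c(n)\,r^{p_\alpha}$. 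This produces exactly the averaged form stated in Lemma~\ref{lem:inter}, with $c=c(n,p,\alpha)$.
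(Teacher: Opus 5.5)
Your proof is correct and follows the standard argument behind the result the paper cites (DiBenedetto, Chapter~I, Proposition~3.1): apply the Gagliardo--Nirenberg inequality on each time slice with $\theta=p_\alpha/m=\frac{n}{n+2}$ to get $\int_{B_r}|v|^m\,\dx\le c\int_{B_r}|Dv|^{p_\alpha}\,\dx\,\big(\int_{B_r}|v|^2\,\dx\big)^{p_\alpha/n}$, then integrate in time, pull out the supremum and normalize by $|B_r|\,s$. Your H\"older-plus-Sobolev treatment of the case $p_\alpha<n$ is simply the usual proof of Gagliardo--Nirenberg in that range, and the borderline case $p_\alpha=n$ causes no trouble because $\theta=\frac{n}{n+2}<1$.
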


\subsection{Lemma on geometric convergence}
Finally, we recall the well-known lemma on fast geometric convergence, see e.g. \cite[Lemma 7.1]{Giusti}.
\begin{lem}\label{lem:geom-conv}
Let $(X_i)_{i\in \N_0}$ be a sequence of positive real 
numbers satisfying the recursive inequalities
\[
    X_{i+1}\le C\lambda^i X_i^{1+\kappa}
    \quad\mbox{for all }i\in\N_0,
\]
where $C,\kappa>0$ and $\lambda>1$ are given numbers. If 
\[
    X_0\le C^{-1/\kappa} \lambda^{-1/\kappa^2}
\]
then $X_i\to 0$ in the limit $i\to\infty$.
\end{lem}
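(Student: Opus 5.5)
The plan is to prove by induction on $i$ the explicit decay estimate
\begin{equation*}
  X_i\le \lambda^{-\frac{i}{\kappa}}X_0\qquad\mbox{for all }i\in\N_0 .
\end{equation*}
Since $\lambda>1$ and $\kappa>0$, we have $\lambda^{-i/\kappa}\to0$ as $i\to\infty$, so this bound immediately gives $X_i\to0$.

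The base case $i=0$ is trivial. For the induction step, assume the bound holds for some $i$. We insert it into the recursive inequality and use $1+\kappa>0$ together with the positivity of the $X_i$:
\begin{equation*}
  X_{i+1}\le C\lambda^i\big(\lambda^{-\frac{i}{\kappa}}X_0\big)^{1+\kappa}
  = C\lambda^{i-\frac{i}{\kappa}-i}X_0^{\kappa}\,X_0
  = C\lambda^{-\frac{i}{\kappa}}X_0^{\kappa}\,X_0 .
\end{equation*}
Next we use the smallness assumption on $X_0$. Raising $X_0\le C^{-1/\kappa}\lambda^{-1/\kappa^2}$ to the power $\kappa$ gives $X_0^\kappa\le C^{-1}\lambda^{-1/\kappa}$. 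Substituting this yields
\begin{equation*}
  X_{i+1}\le \lambda^{-\frac{i}{\kappa}}\lambda^{-\frac1\kappa}X_0=\lambda^{-\frac{i+1}{\kappa}}X_0,
\end{equation*}
which closes the induction.

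There is no real obstacle. The only point requiring care is guessing the right rate $\lambda^{-1/\kappa}$ for the induction hypothesis. This rate is exactly the one for which the factor $\lambda^i$ produced by the recursion is cancelled by $\lambda^{-i(1+\kappa)/\kappa}$, up to the loss $\lambda^{-i/\kappa}$. That loss is precisely compensated by the hypothesis on $X_0$, which is why the threshold takes the form $C^{-1/\kappa}\lambda^{-1/\kappa^2}$.
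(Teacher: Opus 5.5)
Your proof is correct: the induction $X_i\le\lambda^{-i/\kappa}X_0$ closes exactly as you compute, and it gives $X_i\to0$. The paper does not prove this lemma itself but cites \cite[Lemma 7.1]{Giusti}, whose proof is precisely this induction, so your approach is essentially the same as the paper's.
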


\section{Local boundedness from above of weak subsolutions}  \label{sec:subsolution}
This section is devoted to the proof of Theorem~\ref{thm:subsolution}. For
later applications in the proof of the existence result, we impose assumptions
that are even more general than in
Theorem~\ref{thm:subsolution} and include regularizations of equation~\eqref{equa0}.
To this end, for a parameter $\varepsilon\in[0,1]$, we assume that the
Carath\'eodory function $\A:\Omega_T\times\R^{n}\to\R^{n}$ satisfies 
\begin{equation}\label{ip1-eps}
|\mathcal{A}(x,t,\xi)|\le b(x,t)(\mu^2+|\xi|^2)^{\frac{q-1}{2}}
+
\varepsilon |\xi|^{q_\beta-1},
\end{equation}
\begin{equation}\label{ip2-eps}
\langle \A(x,t,\xi),\xi\rangle\ge a(x,t)(\mu^2+|\xi|^2)^{\frac {p-2}{2}}|\xi|^2+\varepsilon |\xi|^{q_\beta},
\end{equation}
for almost every $(x,t)\in\Omega_T$ and all $\xi\in\real^{n}$, with $2\le p\le q$, $\mu\in[0,1]$ and functions
$a,b:\Omega_T\to[0,\infty)$ satisfying \eqref{summab} and \eqref{gap}.

\subsection{A Caccioppoli type inequality}
The first step in the proof of the local sup-bound is the following
Caccioppoli type inequality. 

   \begin{lem}\label{lem:caccioppoli}
    Let $u\in L^{q_\beta}(0,T; W^{1,q_\beta}(\Omega))$ be a weak subsolution to~\eqref{equa0}, under assumptions~\eqref{ip1-eps}, \eqref{ip2-eps}, \eqref{summab} and \eqref{gap}. Then, for any pair of concentric cylinders
    $Q_{\rho,\sigma}(z_o)\Subset Q_{r,s}(z_o)$ with $\dist(Q_{r,s}(z_o),\partial_{\mathrm{par}}\Omega_T)>0$ and any
    constant $k\in\R$, we have the  estimate
    \begin{align*}
      &\sup_{t\in (t_o-\sigma,t_o)}\int_{B_\rho(x_o)\times\{t\}}(u-k)_+^2\d x
      +
        \big\|a^{-1}\big\|_{L^\alpha}^{-1}\,\bigg(\int_{Q_{\rho,\sigma}(z_o)}
        |D(u-k)_+|^{p_\alpha}\d z\bigg)^{\frac{\alpha+1}{\alpha}}\\
      &\qquad\qquad+
      \varepsilon\int_{Q_{\rho,\sigma}(z_o)}|D(u-k)_+|^{q_\beta}\,\dz\\
      &\qquad\le
      \frac{c\mu^{q-1}}{r-\rho}\|b\|_{L^\beta}
    \left( \int_{Q_{r,s}(z_o)}(u-k)_+^{\beta'}\,\dz\right)^\frac{1}{\beta'}
     +
     \Bigg[\frac{c}{r-\rho}\|b\|_{L^\beta}\big\|a^{-1}\big\|_{L^\alpha}^{\frac{q-1}{p}}
     \left(\int_{Q_{r,s}(z_o)}(u-k)_+^{\gamma}\,\dz\right)^{\frac{1}{\gamma}}\Bigg]^{\frac{p}{p+1-q}} \cr
     &\qquad\quad+\frac{c \varepsilon}{(r-\rho)^{q_\beta}}
    \int_{Q_{r,s}(z_o)}(u-k)_+^{q_\beta}\,\dz +
     \frac{c}{s-\sigma}\int_{Q_{r,s}(z_o)} (u-k)_+^2\,\dz.
    \end{align*}
    The constant $c$ in the above estimate depends only on
    $n,p,q,\alpha$ and $\beta$ but is independent of $\varepsilon$,
    and the exponent $\gamma$ in the second integral on the right-hand
    side is defined in~\eqref{def:gamma}.
  \end{lem}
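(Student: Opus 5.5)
We test the subsolution inequality with $\varphi=(u-k)_+\eta^{q_\beta}\chi$, where $\eta\in C^\infty_0(B_r(x_o))$ is a spatial cut-off with $\eta\equiv1$ on $B_\rho$ and $|D\eta|\le 2/(r-\rho)$, and $\chi$ is a time cut-off that vanishes for $t\le t_o-s$, equals $1$ on $[t_o-\sigma,\tau]$, has $0\le\chi'\le 2/(s-\sigma)$ in between, and is switched off in a thin layer after a fixed $\tau\in(t_o-\sigma,t_o)$. This test function is not admissible, because $\partial_t u$ need not exist. We therefore replace $u$ by the mollification $\mollifytime{u}{h}$ from Lemma~\ref{lem:mollifier}. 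Following the standard argument (e.g.\ \cite{BDM:pq}), we rewrite the weak formulation with $\varphi$ replaced by $\mollifytime{\varphi}{h}$-type objects, use the identity $\partial_t\mollifytime{u}{h}=\frac1h(u-\mollifytime{u}{h})$, and exploit the monotonicity of $s\mapsto(s-k)_+$ to drop a term with a good sign. This yields
\[
\tfrac12\int_{B_r}(\mollifytime{u}{h}-k)_+^2\eta^{q_\beta}\chi(\tau)\,\dx-\tfrac12\int (\mollifytime{u}{h}-k)_+^2\eta^{q_\beta}\chi'\,\dz .
\]
Because $u\in L^{q_\beta}(0,T;W^{1,q_\beta})$, Lemma~\ref{lem:mollifier} gives $D\mollifytime{u}{h}\to Du$ in $L^{q_\beta}$. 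By \eqref{ip1-eps} and Hölder's inequality with $b\in L^\beta$, $\A(x,t,Du)\in L^{q_\beta'}$, so we can let $h\to0$ in the diffusion term. Letting the layer shrink and taking the supremum over $\tau$ then gives the energy term on the left and $\frac{c}{s-\sigma}\int(u-k)_+^2$ on the right.

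Next comes the diffusion term, $\int\langle\A(x,t,Du),D(u-k)_+\rangle\eta^{q_\beta}\chi+q_\beta\int\langle\A,D\eta\rangle(u-k)_+\eta^{q_\beta-1}\chi$. By \eqref{ip2-eps}, and using $p\ge2$, the first integral is bounded below by $\int a|D(u-k)_+|^p\eta^{q_\beta}\chi+\eps\int|D(u-k)_+|^{q_\beta}\eta^{q_\beta}\chi$. For the $L^{p_\alpha}$ norm, Hölder with exponents $\frac{\alpha+1}{\alpha},\alpha+1$ gives
\[
\int|Dw|^{p_\alpha}\le\Big(\int a|Dw|^p\Big)^{\frac{\alpha}{\alpha+1}}\|a^{-1}\|_{L^\alpha}^{\frac{1}{\alpha+1}},
\]
which produces the term $\|a^{-1}\|_{L^\alpha}^{-1}(\int|D(u-k)_+|^{p_\alpha})^{(\alpha+1)/\alpha}$ on the left. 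For the second integral we use \eqref{ip1-eps} and split $(\mu^2+|Du|^2)^{\frac{q-1}2}\le c\mu^{q-1}+c|Du|^{q-1}$ on $\{u>k\}$. There are three pieces.

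The $\mu$-piece is handled by Hölder with $b\in L^\beta$ and gives $\frac{c\mu^{q-1}}{r-\rho}\|b\|_{L^\beta}\|(u-k)_+\|_{L^{\beta'}}$.

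The $\eps$-piece is treated with Young's inequality: $\eps|Dw|^{q_\beta-1}\eta^{q_\beta-1}|D\eta|w\le\delta\eps|Dw|^{q_\beta}\eta^{q_\beta}+c\eps w^{q_\beta}|D\eta|^{q_\beta}$. This is why the power $\eta^{q_\beta}$ was chosen.

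The main obstacle is the $b|Du|^{q-1}$ piece. Apply Hölder with the three exponents $\beta$, $\frac{p_\alpha}{q-1}$ and $\gamma$, noting that $\frac1\beta+\frac{q-1}{p_\alpha}+\frac1\gamma=1$ is exactly the definition \eqref{def:gamma}. This bounds the piece by
\[
\frac{c}{r-\rho}\|b\|_{L^\beta}\|D(u-k)_+\|_{L^{p_\alpha}(Q_{r,s})}^{q-1}\|(u-k)_+\|_{L^\gamma}.
\]
The gradient here lives on the larger cylinder, so it cannot be absorbed directly. Set $Y=\|a^{-1}\|^{-1}_{L^\alpha}\|D(u-k)_+\|_{L^{p_\alpha}}^{p}$, which is the left-hand quantity. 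Then $\|D(u-k)_+\|^{q-1}_{L^{p_\alpha}}=(\|a^{-1}\|_{L^\alpha}Y)^{\frac{q-1}{p}}$. Since $q-1<p$ by \eqref{gap}, Young's inequality with exponents $\frac{p}{q-1}$ and $\frac{p}{p+1-q}$ splits this into $\frac12Y$ plus the bracketed term raised to $\frac{p}{p+1-q}$.

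After these steps we have an inequality of the form $f(\rho,\sigma)\le\frac12 f(r,s)+\dots$. We make it non-circular by running the argument on all intermediate cylinders and applying Lemma~\ref{lem:Giaq}, with $s-\sigma$ tied to $r-\rho$ linearly along the interpolation. Alternatively, and more simply, we keep $\eta$ inside the gradient norm from the start, so that the absorbed term is literally the left-hand side weighted by $\eta$. This yields the stated estimate with $c$ independent of $\eps$.
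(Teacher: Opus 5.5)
Your main route is essentially the paper's proof. You test with $(\mollifytime{u}{h}-k)_+$ times a cut-off, pass to $h\downarrow0$ using $\A(x,t,Du)\in L^{q_\beta'}$, apply H\"older with $\beta,\frac{p_\alpha}{q-1},\gamma$ and Young with $\frac{p}{q-1},\frac{p}{p+1-q}$, and absorb the larger-cylinder gradient term via Lemma~\ref{lem:Giaq}; linking $s-\sigma$ to $r-\rho$ also works, whereas the paper simply keeps the time cut-off fixed inside the gradient norm and varies only the spatial radii.

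There are two minor slips. The H\"older step should give $\|a^{-1}\|_{L^\alpha}^{\alpha/(\alpha+1)}$, not $\|a^{-1}\|_{L^\alpha}^{1/(\alpha+1)}$. Also, your ``simpler'' direct-absorption alternative with weight $\eta^{q_\beta}$ only matches the left-hand side if $q_\beta\ge\frac{p}{p+1-q}$, and \eqref{gap} does not guarantee this (e.g.\ $p=2$, $q$ close to $2+\frac{2}{n+2}$, $\alpha,\beta$ large); there you would need $\eta^N$ with $N\ge\max\{q_\beta,\frac{p}{p+1-q}\}$.
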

\begin{rem}\label{rem:gap}
 When comparing the exponents of $(u-k)_+$ on the right-hand side, we note that $\gamma\ge\beta'$ holds by definition, and the facts $q\ge1$ and $p_\alpha\le p\le q$ imply 
\begin{equation}\label{bounds-gamma}
  \gamma\ge\frac{\gamma}{\beta'}\ge\frac{p_\alpha}{p_\alpha+1-q}\ge\frac{p}{p+1-q}\ge q\ge2.
\end{equation}

Therefore, assumption \eqref{gap} implies 
\begin{equation}\label{upper-bound-gamma}
  \max\{\gamma,2,\beta',q_\beta\}=\gamma<\frac{p_\alpha(n+2)}{n}.
\end{equation}
\end{rem}
  
  \begin{proof}
  We omit the center $z_o=(x_o,t_o)$ from the notation for convenience.
  For fixed parameters $k\in\R$ and $h>0$, we set $u_{k,h}=(\mollifytime{u}{h}-k)_+$, with the time mollification $\mollifytime{u}{h}$ defined in \eqref{time-molli}. In Definition \ref{def:weak-solution}, we choose the test function $\varphi=\Phi u_{k,h}$, for a non-negative function $\Phi\in C^\infty_0(Q_{r,s})$ that will be specified later. This yields
  \begin{align*}
   0 &\le\int_{\Omega_T}u\partial_t(\Phi u_{k,h})\,\dz
     -
     \int_{\Omega_T}\Phi\langle \A(x,t,Du), Du_{k,h}\rangle\,\dz 
     -
     \int_{\Omega_T}\langle \A(x,t,Du),D\Phi \otimes u_{k,h}\rangle\,\dz\cr
     &= \int_{\Omega_T\cap \{\mollifytime{u}{h}>k\}} u\partial_t(\Phi (\mollifytime{u}{h}-k))\, \dz-\int_{\Omega_T\cap \{\mollifytime{u}{h}>k\}}\Phi\langle \A(x,t,Du), D\mollifytime{u}{h}\rangle\,\dz\cr
     &\qquad-\int_{\Omega_T\cap \{\mollifytime{u}{h}>k\}}\langle \A(x,t,Du), D\Phi \otimes (\mollifytime{u}{h}-k)\rangle\,\dz\cr
     &=:I_1-I_2-I_3,
  \end{align*}
  which is equivalent to
  \begin{equation}\label{equation-I-123}
      -I_1+I_2\le-I_3.
  \end{equation}
  We rewrite the first term as 
  \begin{align*}
    I_1
    &= 
   \int_{\Omega_T\cap \{\mollifytime{u}{h}>k\}} u\partial_t(\mollifytime{u}{h}-k)\Phi\,\dz
   +
   \int_{\Omega_T\cap \{\mollifytime{u}{h}>k\}} u(\mollifytime{u}{h}-k)\partial_t\Phi \,\dz\\
   &=
   \int_{\Omega_T\cap \{\mollifytime{u}{h}>k\}} (\mollifytime{u}{h}-k)\partial_t(\mollifytime{u}{h}-k)\Phi\,\dz
   +
   k\int_{\Omega_T\cap \{\mollifytime{u}{h}>k\}} \partial_t(\mollifytime{u}{h}-k)\Phi\,\dz\\
   &\qquad+\int_{\Omega_T\cap \{\mollifytime{u}{h}>k\}} (u-\mollifytime{u}{h})\partial_t(\mollifytime{u}{h}-k)\Phi\,\dz
   +\int_{\Omega_T\cap \{\mollifytime{u}{h}>k\}} u(\mollifytime{u}{h}-k)\partial_t\Phi \,\dz.
  \end{align*}
  We integrate by parts in the first two integrals on the right-hand side. In the third integral, we use Lemma \ref{lem:mollifier}\,(iv), which implies $(u-\mollifytime{u}{h})\partial_t(\mollifytime{u}{h}-k)\le0$. In this way, we get 
  \begin{align*}
  I_1
  &\le -\frac12 \int_{\Omega_T} (\mollifytime{u}{h}-k)^2_+\partial_t\Phi\,\dz
  +
  \int_{\Omega_T} (u-k)(\mollifytime{u}{h}-k)_+\partial_t\Phi \,\dz.
  \end{align*}
  Letting $h\downarrow0$ and using Lemma~\ref{lem:mollifier}\,(i), we deduce 
  \begin{align}\label{est-I1}
    \limsup_{h\downarrow0}I_1\le \frac12 \int_{\Omega_T} (u-k)^2_+\partial_t\Phi\,\dz.
  \end{align}
  Next, we observe that assumption~\eqref{ip1-eps} implies 
  \begin{equation*}
      |\mathcal{A}(x,t,Du)|
      \le
      b(x,t)(\mu^2+|Du|^2)^{\frac{q-1}{2}}+\varepsilon |Du|^{q_\beta-1}.
  \end{equation*}
  In view of the assumptions $b\in L^\beta(\Omega_T)$ and $Du\in L^{q_\beta}(\Omega_T)$, this yields 
  \begin{equation}\label{A-in-dual-space}
    |\mathcal{A}(x,t,Du)|\in L^{\frac{\beta q}{\beta(q-1)+1}}(\Omega_T)=\big[L^{q_\beta}(\Omega_T)\big]'.
\end{equation}    
  Because Lemma~\ref{lem:mollifier}\,(ii) yields the convergence $D\mollifytime{u}{h}\to Du$ in $L^{q_\beta}(\Omega_T,\R^n)$, as $h\downarrow0$, we obtain 
  \begin{align*}
    \lim_{h\downarrow0}I_2
    =
    \lim_{h\downarrow0}
    \int_{\Omega_T}\Phi\langle \A(x,t,Du),D(\mollifytime{u}{h}-k)_+\rangle\,\dz
    =
    \int_{\Omega_T}\Phi\langle \A(x,t,Du),D(u-k)_+\rangle\,\dz.
  \end{align*}
  
  We estimate the right-hand side by means of assumption \eqref{ip2-eps} and obtain
  \begin{eqnarray}\label{est-I2}
     \lim_{h\downarrow0}I_2&=& \int_{\Omega_T\cap \{u>k\}} \Phi\langle \A(x,t,Du),Du\rangle\,\dz\cr\cr
     &\ge& \int_{\Omega_T\cap \{u>k\}} a(x,t) |Du|^p\Phi\,\dz+ \varepsilon \int_{\Omega_T\cap \{u>k\}}|Du|^{q_\beta}\Phi \,\dz. 
  \end{eqnarray}
  Using again~\eqref{A-in-dual-space} and the convergence properties of the mollification from Lemma~\ref{lem:mollifier}, we deduce
  \begin{equation}
      \lim_{h\downarrow0}I_3
      =
      \int_{\Omega_T}\langle \A(x,t,Du),D\Phi \otimes (u-k)_+\rangle\,\dz.
  \end{equation}
  Therefore, assumption \eqref{ip1-eps} gives
  \begin{equation}\label{est-I3}
  \lim_{h\downarrow0}|I_3|
  \le 
  \int_{\Omega_T} b(x,t)(\mu^2+|Du|^2)^{\frac{q-1}{2}}| D\Phi |(u-k)_+\,\dz+ \varepsilon \int_{\Omega_T}|Du|^{q_\beta-1}| D\Phi |(u-k)_+\,\dz.
  \end{equation}
  Letting $h\downarrow0$ in \eqref{equation-I-123} and using \eqref{est-I1}, \eqref{est-I2} and \eqref{est-I3}, we arrive at the bound
   \begin{align}\label{est2}
   &-\frac12 \int_{\Omega_T} (u-k)_+^2\partial_t \Phi \,\dz+ \int_{\Omega_T} a(x,t)|D(u-k)_+|^p \Phi\,\dz+ \varepsilon \int_{\Omega_T}|D(u-k)_+|^{q_\beta}\Phi \,\dz\nonumber\\
   &\qquad\le \int_{\Omega_T} b(x,t)(\mu^2+|Du|^2)^{\frac{q-1}{2}}| D\Phi |(u-k)_+\,\dz\nonumber\\
   &\qquad+\varepsilon \int_{\Omega_T}|Du|^{q_\beta-1}| D\Phi |(u-k)_+\,\dz. 
  \end{align}
  For a fixed time $t_1\in(t_o-\sigma,t_o)$ and $\delta\in(0,t_o-t_1)$, we define the Lipschitz continuous function $\bar\chi:(0,T)\to\R$ by
  \begin{equation}
    \bar\chi(t)=
    \left\{\begin{array}{l l}1 &\mathrm{if }\quad t\le t_1,\\
    \mathrm{affine } & \mathrm{if }\quad t_1<t< t_1+\delta,\\
     0 & \mathrm{if }\quad t\ge t_1+\delta.
    \end{array}\right.
  \end{equation}
  We consider a second cut-off function in time $\chi\in C^{0,1}((0,T),[0,1])$ which is chosen to be piecewise affine with
  $\chi(t)\equiv 0$ on $(0,t_o-s)$, $\chi(t)\equiv 1$ on
$(t_o-\sigma, T)$ and $\partial_t \chi\equiv \frac{1}{s-\sigma}$
 on $(t_0-s,t_0-\sigma)$. 
  Moreover, we choose 
  a cut-off function in space $\varphi\in C^\infty_0(B_r,[0,1])$ satisfying $\varphi\equiv 1$ in $B_{\rho}$ and $|D\varphi|\le \frac{2}{r-\rho}$ on $B_r$.
  Then we choose $\Phi(x,t)=\chi(t)\bar\chi(t)\varphi(x)$ in \eqref{est2}. 
  By letting $\delta\to 0$ we obtain
  \begin{align*}
     &\frac12\int_{\Omega\times\{t_1\}}(u-k)_+^2\,\chi\varphi\,\dx+ \int_{\Omega_{t_1}} a(x,t)|D(u-k)_+|^p \chi\varphi\,\dz+ \varepsilon \int_{\Omega_{t_1}}|D(u-k)_+|^{q_\beta}\chi\varphi  \,\dz\nonumber\\
     &\qquad\le 
     \int_{Q_{r,s}} b(x,t)(\mu^2+|Du|^2)^{\frac{q-1}{2}}\chi|D\varphi|(u-k)_+\,\dz +\varepsilon\int_{Q_{r,s}} |Du|^{q_\beta-1}\chi|D\varphi|(u-k)_+\,\dz
     \nonumber\\
     &\qquad+
     \frac12 \int_{Q_{r,s}} (u-k)_+^2\partial_t\chi\,\varphi \,\dz
  \end{align*}
  for almost every $t_1\in (t_o-\sigma,t_o)$. 
  We use this inequality in two ways. First, we omit the second and third integral on the left-hand side and take the supremum over $t_1\in(t_o-\sigma,t_o)$. Second, we omit the first integral and let $t_1\to t_o$. In this way, we deduce
 \begin{align}\label{pre-caccio}
     &\frac12\sup_{t\in(t_o-\sigma,t_o)}\int_{B_\rho\times\{t\}}(u-k)_+^2 \,\dx
     + 
     \int_{Q_{\rho,s}} a(x,t)|D(u-k)_+|^p \chi\,\dz+ \varepsilon \int_{Q_{\rho,s}}|D(u-k)_+|^{q_\beta}\chi  \,\dz\cr
     &\qquad\le 
     2\int_{Q_{r,s}} b(x,t)(\mu^2+|Du|^2)^{\frac{q-1}{2}}\chi|D\varphi|(u-k)_+\,\dz 
     +2\varepsilon\int_{Q_{r,s}} |Du|^{q_\beta-1}\chi|D\varphi|(u-k)_+\,\dz\nonumber\\
     &\qquad\qquad+
     \int_{Q_{r,s}} (u-k)_+^2\partial_t\chi\,\dz\cr
     &\qquad\le 
     2^{q-1}\mu^{q-1}\int_{Q_{r,s}} b(x,t)\chi|D\varphi|(u-k)_+\,\dz
     +
     2^{q-1}\int_{Q_{r,s}} b(x,t)|Du|^{q-1}\chi|D\varphi|(u-k)_+\,\dz \cr
&\qquad\qquad+
     2\varepsilon\int_{Q_{r,s}} |Du|^{q_\beta-1}\chi|D\varphi|(u-k)_+\,\dz+
     \int_{Q_{r,s}} (u-k)_+^2\partial_t\chi\,\dz. 
  \end{align}  
We estimate the first integral in the right-hand side with the use of H\"older's inequality, obtaining
\begin{align}\label{equa2b}
  2^{q-1}\mu^{q-1}\int_{Q_{r,s}} b(x,t)\chi|D\varphi|(u-k)_+\,\dz 
  \le 2^{q-1}\mu^{q-1}\|b\|_{L^\beta}
    \left( \int_{Q_{r,s}}(u-k)_+^{\beta'}|D\varphi|^{\beta'}\,\dz\right)^\frac{1}{\beta'}.
\end{align}
For the estimate of the second integral on the right-hand side of~\eqref{pre-caccio}, we use H\"older's inequality with exponents
\begin{equation}\label{choice-gamma} 
  \beta,\,\, \frac{p_\alpha}{q-1} \qquad \text{and}\quad \gamma=\frac{\beta p_\alpha}{(\beta-1)p_\alpha-\beta(q-1)}\,.
\end{equation}
Therefore, we obtain
\begin{align}\label{equa3}
& 2^{q-1}\int_{Q_{r,s}}b(x,t)|Du|^{q-1}\chi|D\varphi|(u-k)_+\,\dz  \cr
&\quad\le2^{q-1}\|b\|_{L^\beta}
\left(\int_{Q_{r,s}}|D(u-k)_+|^{p_\alpha} \chi^{\frac{p_\alpha}{q-1}}\,\dz\right)^{\frac{q-1}{p_\alpha}}
\left(\int_{Q_{r,s}}(u-k)_+^{\gamma}|D\varphi|^{\gamma}\,\dz\right)^{\frac{1}{\gamma}}.
\end{align}
For the estimate of the third one, we use H\"older's inequality with exponents $\frac{q_\beta}{q_\beta-1}$ and $q_\beta$   
and  
we deduce that
\begin{align}\label{equa4.14}
2\varepsilon &\int_{Q_{r,s}} |Du|^{q_\beta-1}\chi|D\varphi|(u-k)_+\,\dz\cr
&\quad\le2\varepsilon
\left(\int_{Q_{r,s}}|D(u-k)_+|^{q_\beta} \chi\,\dz\right)^{\frac{q_\beta-1}{q_\beta}}
\left(\int_{Q_{r,s}}(u-k)_+^{q_\beta}\chi|D\varphi|^{q_\beta}\,\dz\right)^{\frac{1}{q_\beta}}.
\end{align}

On the other hand, the second integral in the left-hand side of~\eqref{pre-caccio} can be estimated from below by H\"older's inequality with exponents $\alpha+1$ and $\frac{\alpha+1}{\alpha}$ as 
\begin{align}\label{equa4}
    \int_{Q_{\rho,s}}a(x,t)|D(u-k)_+|^p\chi\,\dz 
    &\ge
    \big\|a^{-1}\big\|_{L^\alpha}^{-1}\left(\int_{Q_{\rho,s}}a^{-\frac{\alpha}{\alpha+1}}\big[a\,|D(u-k)_+|^{p}\chi\big]^{\frac{\alpha}{\alpha+1}}\,\dz\right)^{\frac{\alpha+1}{\alpha}}\cr
    &\ge
    \big\|a^{-1}\big\|_{L^\alpha}^{-1}\left(\int_{Q_{\rho,s}}|D(u-k)_+|^{p_\alpha}\chi^{\frac{p_\alpha}{q-1}}\,\dz\right)^{\frac{\alpha+1}{\alpha}}.
\end{align}
In the last step, we also used the fact $p>q-1$, which implies $\chi^{\frac{\alpha}{\alpha+1}}\ge\chi^{\frac{p_\alpha}{q-1}}$.
Combining \eqref{pre-caccio} with \eqref{equa2b}, \eqref{equa3}, \eqref{equa4.14} and \eqref{equa4}, we conclude
 \begin{align}\label{pre-caccio-2}
     &\frac12\sup_{t\in(t_o-\sigma,t_o)}\int_{B_\rho\times\{t\}}(u-k)_+^2 \,\dx
     + 
     \big\|a^{-1}\big\|_{L^\alpha}^{-1}\left(\int_{Q_{\rho,s}}|D(u-k)_+|^{p_\alpha}\chi^{\frac{p_\alpha}{q-1}}\,\dz\right)^{\frac{\alpha+1}{\alpha}}\cr&\qquad\qquad+ \varepsilon \int_{Q_{\rho,s}}|D(u-k)_+|^{q_\beta}\chi  \,\dz\cr
     &\qquad\le 
     2^{q-1}\mu^{q-1}\|b\|_{L^\beta}
    \left( \int_{Q_{r,s}}(u-k)_+^{\beta'}|D\varphi|^{\beta'}\,\dz\right)^\frac{1}{\tilde\beta'}\cr
     &\qquad\qquad+
     2^{q-1}\|b\|_{L^\beta}
     \left(\int_{Q_{r,s}}|D(u-k)_+|^{p_\alpha}\chi^{\frac{p_\alpha}{q-1}} \,\dz\right)^{\frac{q-1}{p_\alpha}}
     \left(\int_{Q_{r,s}}(u-k)_+^{\gamma}|D\varphi|^{\gamma}\,\dz\right)^{\frac{1}{\gamma}}\cr
      &\qquad\qquad+
    2\varepsilon
\left(\int_{Q_{r,s}}|D(u-k)_+|^{q_\beta} \chi\,\dz\right)^{\frac{q_\beta-1}{q_\beta}}
\left(\int_{Q_{r,s}}(u-k)_+^{q_\beta}\chi|D\varphi|^{q_\beta}\,\dz\right)^{\frac{1}{q_\beta}} \cr
     &\qquad\qquad+
     \int_{Q_{r,s}} (u-k)_+^2\partial_t\chi \,\dz. 
  \end{align}  
 Next, we apply Young's inequality with exponents $\frac{p}{q-1}$ and $\frac{p}{p+1-q}$ in the second term on the right-hand side and with exponents $\frac{q_\beta}{q_\beta-1}$ and $q_\beta$ in the third one. Moreover, we use the bounds $|D\varphi|\le\frac{2}{r-\rho}$ and $0\le\partial_t\chi\le\frac{1}{s-\sigma}$. This leads to the estimate 
 \begin{align}\label{pre-caccio-3}
     &\frac12\sup_{t\in(t_o-\sigma,t_o)}\int_{B_\rho\times\{t\}}(u-k)_+^2\,\dx
     + 
     \big\|a^{-1}\big\|_{L^\alpha}^{-1}\left(\int_{Q_{\rho,s}}|D(u-k)_+|^{p_\alpha}\chi^{\frac{p_\alpha}{q-1}}\,\dz\right)^{\frac{\alpha+1}{\alpha}}\cr&\qquad\qquad+ \varepsilon \int_{Q_{\rho,s}}|D(u-k)_+|^{q_\beta}\chi  \,\dz\cr
     &\qquad\le 
     \frac{c\mu^{q-1}}{r-\rho}\|b\|_{L^\beta}
    \left( \int_{Q_{r,s}}(u-k)_+^{\beta'}\,\dz\right)^\frac{1}{\beta'}\cr
     &\qquad\qquad+
     \frac{c}{r-\rho}\|b\|_{L^\beta}
     \frac{\big\|a^{-1}\big\|_{L^\alpha}^{\frac{q-1}{p}}}{\big\|a^{-1}\big\|_{L^\alpha}^{\frac{q-1}{p}}}
     \left(\int_{Q_{r,s}}|D(u-k)_+|^{p_\alpha}\chi^{\frac{p_\alpha}{q-1}} \,\dz\right)^{\frac{q-1}{p_\alpha}}
     \left(\int_{Q_{r,s}}(u-k)_+^{\gamma}\,\dz\right)^{\frac{1}{\gamma}} \cr
     &\qquad\qquad+
     \frac{c}{r-\rho}\varepsilon
\left(\int_{Q_{r,s}}|D(u-k)_+|^{q_\beta} \chi\,\dz\right)^{\frac{q_\beta-1}{q_\beta}}
\left(\int_{Q_{r,s}}(u-k)_+^{q_\beta}\,\dz\right)^{\frac{1}{q_\beta}} \cr
     &\qquad\qquad+
     \frac{1}{s-\sigma}\int_{Q_{r,s}} (u-k)_+^2\,\dz\cr
     &\qquad\le
     \frac12\big\|a^{-1}\big\|_{L^\alpha}^{-1}\left(\int_{Q_{r,s}}|D(u-k)_+|^{p_\alpha}\chi^{\frac{p_\alpha}{q-1}}\,\dz\right)^{\frac{\alpha+1}{\alpha}}+\frac{\varepsilon}{2}
\int_{Q_{r,s}}|D(u-k)_+|^{q_\beta} \chi\,\dz\cr
     &\qquad\qquad+
     \frac{c\mu^{q-1}}{r-\rho}\|b\|_{L^\beta}
    \left( \int_{Q_{r,s}}(u-k)_+^{\beta'}\,\dz\right)^\frac{1}{\beta'}\cr
     &\quad\quad+
     \Bigg[\frac{c}{r-\rho}\|b\|_{L^\beta}\big\|a^{-1}\big\|_{L^\alpha}^{\frac{q-1}{p}}
     \left(\int_{Q_{r,s}}(u-k)_+^{\gamma}\,\dz\right)^{\frac{1}{\gamma}}\Bigg]^{\frac{p}{p+1-q}} \cr
     &\qquad\quad+\frac{c \varepsilon}{(r-\rho)^{q_\beta}}
    \int_{Q_{r,s}}(u-k)_+^{q_\beta}\,\dz +
     \frac{1}{s-\sigma}\int_{Q_{r,s}} (u-k)_+^2\,\dz, 
  \end{align}
  with a constant $c=c(n,p,q,\alpha,\beta)$ that is independent of $\varepsilon$.    
  In this estimate, we may replace the radii $\rho,r$ by arbitrary radii $\rho',r'$ with $\rho<\rho'<r'<r$. Therefore, Lemma~\ref{lem:Giaq} is applicable, which allows to discard of the first and of the second integrals on the right-hand side. This yields the asserted estimate and concludes the proof of the lemma.
\end{proof}

\subsection{A local sup-estimate for weak subsolutions}

Now we are in a position to prove the local boundedness from above of weak subsolutions of~\eqref{equa0} under assumptions \eqref{ip1-eps} and \eqref{ip2-eps} for small values of $\eps\ge0$. 

\begin{prop}\label{prop:apriori}
   Assume that $u\in L^{q_\beta}(0,T; W^{1,q_\beta}(\Omega))$ is a weak subsolution of~\eqref{equa0} in the sense of Definition~\ref{def:weak-solution}, under assumptions~\eqref{ip1-eps}, \eqref{ip2-eps}, \eqref{summab} and \eqref{gap}.
   Then, for every cylinder $Q:=Q_{2\rho,2\sigma}(z_o)$ with
   $\dist(Q,\partial_{\mathrm{par}}\Omega_T)>0$ for parameters $\rho,\sigma>0$, we have the following estimate on the smaller cylinder $\frac12 Q:=Q_{\rho,\sigma}(z_o)$.
    \begin{align*}
        &\esssup_{\frac12 Q}u\\
        &\quad\le
        \mathrm{C}_{a,b}\max\left\{\bigg(\frac{\sigma}{\rho^{\frac{p}{p+1-q}}}\bigg)^{\vartheta_1}\frac{1}{\rho^{(q-p)\vartheta_2}}\bigg(\mint_{Q} u_+^m\,\d z\bigg)^{\vartheta_3},\bigg(\mint_{Q} u_+^m\,\d z\bigg)^{\frac1m},\bigg(\frac{\rho^{\frac{p}{p+1-q}}}{\sigma}\bigg)^{\frac{p+1-q}{p-2+2(q-p)}},\rho\right\}
        =:k,
    \end{align*} 
    provided
    \begin{equation}\label{assumption-eps}
     0\le\varepsilon\le\widetilde{\mathrm{C}}_{a,b}\left(\frac{k}{\rho}\right)^{\frac{p}{p+1-q}-q_\beta}.
  \end{equation} 
    Here, we abbreviated
    $m:=\frac{p_\alpha(n+2)}{n}$. The constants $\vartheta_i>0$, $i\in\{1,2,3\}$, depend at most on $n,p,q,\alpha,$ and $\beta$ and the constants $\mathrm{C}_{a,b}$ and $\widetilde{\mathrm{C}}_{a,b}$ depend additionally on the quantities
    \begin{equation*}
        \mint_{Q}a^{-\alpha}\,\dx\dt
        \qquad\mbox{and}\qquad
         \mint_{Q}b^{\beta}\,\dx\dt.
    \end{equation*}
    The exact form of dependency on these quantities can be retrieved from~\eqref{choice-k} and \eqref{condition-eps}.
\end{prop}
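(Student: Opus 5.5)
We run a De Giorgi iteration based on Lemma~\ref{lem:caccioppoli} on shrinking cylinders. Fix $z_o=0$. For $i\in\N_0$ we set $\rho_i=\rho(1+2^{-i})$, $\sigma_i=\sigma(1+2^{-i})$, $Q_i=Q_{\rho_i,\sigma_i}$ and $k_i=k(1-2^{-i-1})$, where $k$ is the level to be chosen. Since $k\ge\rho>0$, we have $(u-k_{i+1})_+\le (u-k_i)_+$. Moreover, on $\{u>k_{i+1}\}$ we have $(u-k_i)_+\ge k2^{-i-2}$, which gives the Chebyshev-type bound
\[
|\{u>k_{i+1}\}\cap Q_i|\le (2^{i+2}/k)^m\int_{Q_i}(u-k_i)_+^m\,\dz .
\]
The quantity to iterate is $Y_i:=\mint_{Q_i}(u-k_i)_+^m\,\dz$. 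The goal is a recursion $Y_{i+1}\le C b^i Y_i^{1+\kappa}$, to which Lemma~\ref{lem:geom-conv} applies, giving $Y_i\to0$ and hence $u\le k$ on $\frac12Q$.

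To obtain the recursion, we first introduce an intermediate cylinder $\tilde Q_i$ between $Q_{i+1}$ and $Q_i$ and a spatial cut-off $\zeta$ that equals $1$ on $Q_{i+1}$ and has support in $\tilde Q_i$. We then apply Lemma~\ref{lem:inter} to $v=(u-k_{i+1})_+\zeta$. This bounds $\int_{Q_{i+1}}(u-k_{i+1})_+^m$ by
\[
\Big(\sup_t\int(u-k_{i+1})_+^2\Big)^{p_\alpha/n}\int|D((u-k_{i+1})_+\zeta)|^{p_\alpha}.
\]
Both factors are controlled by Lemma~\ref{lem:caccioppoli}, applied on $\tilde Q_i\Subset Q_i$ with level $k_{i+1}$; the gradient term enters through its $(\alpha+1)/\alpha$ power, so we extract the gradient integral as the RHS raised to $\alpha/(\alpha+1)$ times $\|a^{-1}\|^{\alpha/(\alpha+1)}$. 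The extra term coming from $|D\zeta|$ is of the same type as the zero-order terms. Each right-hand-side term has the form $\int_{Q_i}(u-k_{i+1})_+^{e}$ with $e\in\{\beta',\gamma,q_\beta,2\}$, and by Remark~\ref{rem:gap} every such $e$ satisfies $e\le\gamma<m$. Hölder's inequality together with the Chebyshev bound then gives
\[
\int_{Q_i}(u-k_{i+1})_+^e\le \Big(\int_{Q_i}(u-k_i)_+^m\Big)^{e/m}\,|\{u>k_{i+1}\}\cap Q_i|^{1-e/m}\le (2^{i}/k)^{m-e}\, Y_i\,|Q_i|\cdot\mathrm{const}.
\]
Here the strict inequality $\gamma<m$ is exactly what produces a superlinear power of $Y_i$. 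Collecting the powers, we get
\[
Y_{i+1}\le C\, b^i\sum_j F_j(k,\rho,\sigma)\,Y_i^{1+\kappa_j},
\]
with exponents $\kappa_j>0$ coming from the combination of $(\cdot)^{p_\alpha/n}$, the powers $\frac{p}{p+1-q}\cdot\frac1\gamma$, $\frac{\alpha}{\alpha+1}$, and so on.

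The $\varepsilon$-term is absorbed using assumption~\eqref{assumption-eps}. The dangerous expression is $\varepsilon(r-\rho)^{-q_\beta}\int(u-k)_+^{q_\beta}$, and after the Hölder step it scales like $\varepsilon(k/\rho)^{q_\beta}$. The smallness condition on $\varepsilon$ makes this at most $\widetilde{\mathrm C}_{a,b}(k/\rho)^{p/(p+1-q)}$, which is comparable to the $\gamma$-term. Similarly, the $\mu^{q-1}$ term with $\mu\le1$ is dominated by the other terms, because $k\ge\rho$ and hence $k/\rho\ge1$. In this way everything is reduced to the two genuinely different scalings: the diffusion term, of size roughly $(k/\rho)^{p/(p+1-q)}k^{-\dots}$, and the time term, of size $k^2/\sigma$.

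\textbf{Main obstacle.} The key difficulty is the final step, reducing the different $\kappa_j$ and prefactors $F_j$ to a single recursion whose smallness condition $Y_0\le C^{-1/\kappa}b^{-1/\kappa^2}$ can be ensured by choosing $k$. First, we use $Y_i\le Y_0\le\mint_Q u_+^m$ and normalize $W_i:=Y_i/k^m$, so that all terms become powers $W_i^{1+\kappa_j}$ multiplied by powers of $k$, $\rho$, $\sigma$. Then $W_{i+1}\le Cb^i\max_j\{\cdots\}W_i^{1+\kappa_{\min}}$ as long as $W_i\le1$, and $W_i\le1$ is guaranteed by the entry $(\mint u_+^m)^{1/m}$ of the max. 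Finally, we demand that each prefactor times $W_0^{\kappa_j}$ be small. Solving these conditions for $k$ produces the four entries of the max, with exponents $\vartheta_1,\vartheta_2,\vartheta_3$; the third entry handles the case in which the time scale dominates (large $\sigma$ relative to $\rho^{p/(p+1-q)}$ with small $k$), and the entry $\rho$ enforces $k\ge\rho$. Carrying out this bookkeeping consistently is expected to be the hardest part.
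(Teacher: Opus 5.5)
Your skeleton is the paper's: Lemma~\ref{lem:caccioppoli} on $\widetilde Q_i\Subset Q_i$ at level $k_{i+1}$, Lemma~\ref{lem:inter} for a spatial cut-off of $(u-k_{i+1})_+$, the level-set (Chebyshev) trick turning each power $e\in\{\beta',\gamma,q_\beta,2,p_\alpha\}$ into $\int(u-k_i)_+^m$ with a factor $(2^i/k)^{m-e}$, and Lemma~\ref{lem:geom-conv}. The gap is the step you defer as bookkeeping. That step is what produces the specific $k$ of the proposition, and your sketch of it is inaccurate in three places.

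First, imposing a separate smallness condition for each term and solving for $k$ does not yield the four stated entries. For example, in the recursion for $W_i=Y_i/k^m$ the time term has prefactor $(\mathbf A\rho^pk^{2-p}/\sigma)^{\alpha/(\alpha+1)}$, with $\mathbf A=(\mint_Q a^{-\alpha})^{1/\alpha}$. Its own condition bounds $k$ from below by a negative power of $\sigma$ times a positive power of $\mint_Q u_+^m$. That is none of the entries, and you would still have to show it is implied by them. Second, your description of the third entry is reversed: it is needed when $\sigma$ is \emph{small} compared to $\rho^{p/(p+1-q)}$, i.e.\ when $k^2/\sigma$ is large. Third, $k\ge\rho$ and $\mu\le1$ alone do not control the $\mu^{q-1}$-term. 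What is needed is $\mu^{p+1-q}\le\mathbf A\mathbf Bk/\rho$, with $\mathbf B=(\mint_Q b^\beta)^{1/\beta}$.

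The missing idea is to choose $k$ so that every term is bounded by a constant times the $\gamma$-term. By \eqref{bounds-gamma}, the $\gamma$-term carries the smallest power of $Y_i/k^m$. The paper's level \eqref{choice-k} guarantees four comparisons:
\begin{itemize}
\item $k^m\ge c\,Y_i$;
\item $\mu^{p+1-q}\le\mathbf A\mathbf Bk/\rho$;
\item $k^2/\sigma\le\mathbf A^{-1}(\mathbf A\mathbf Bk/\rho)^{p/(p+1-q)}$, which is exactly what the third entry encodes;
\item $(k/\rho)^{q-p}\ge(\mathbf A\mathbf B)^{-1}$, which handles the term produced by the cut-off $\zeta$.
\end{itemize}
The last two entries of \eqref{choice-k} are $\le\mathrm{C}_{a,b}\rho$, which is why the statement only lists $\rho$. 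Together with \eqref{assumption-eps} for the $\varepsilon$-term, this gives the single recursion
\[
X_{i+1}\le c\,\lambda^i\,\frac{\Theta}{k^{(m-\gamma)(1+\kappa)}}\,X_i^{1+\kappa},\qquad
\Theta=\Big[\frac{\sigma}{\mathbf A}\Big(\frac{\mathbf A\mathbf B}{\rho}\Big)^{\frac{p}{p+1-q}}\Big]^{\frac{p_\alpha}{n}}\Big(\frac{\mathbf A\mathbf B}{\rho^{q-p}}\Big)^{\frac{p_\alpha}{p+1-q}},
\]
with $\kappa=\frac1\gamma\frac{p}{p+1-q}\frac{n+p}{n}\frac{\alpha}{\alpha+1}-1>0$. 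The condition of Lemma~\ref{lem:geom-conv} then becomes exactly the first entry, with
\[
\vartheta_1=\frac{p_\alpha}{n(m-\gamma)(1+\kappa)},\qquad
\vartheta_2=\frac{p_\alpha}{(p+1-q)(m-\gamma)(1+\kappa)},\qquad
\vartheta_3=\frac{\kappa}{(m-\gamma)(1+\kappa)}.
\]
The strict inequality $\gamma<m$ is what keeps these exponents finite. With this domination step supplied, your outline becomes the paper's proof.
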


\begin{proof}
  To simplify the notation, we assume $z_o=(x_o,t_o)=(0,0)$. For given parameters $\rho,\sigma>0$ with $\dist(Q_{2\rho,2\sigma},\partial_{\mathrm{par}}\Omega_T)>0$, we let
  \begin{align*}
    \rho_i&:= \rho+\tfrac1{2^i}\rho,\qquad
    \tilde\rho_i:= \tfrac12(\rho_i+\rho_{i+1}),\\
    \sigma_i&:= \sigma+\tfrac1{2^i}\sigma,\qquad
    \tilde\sigma_i:= \tfrac12(\sigma_i+\sigma_{i+1}),
  \end{align*}
  for $i\in\N_0$. Moreover, we use the abbreviations
  \begin{equation*}
    Q_i:=Q_{\rho_i,\sigma_i}, \qquad \widetilde Q_i:=Q_{\tilde\rho_i,\tilde\sigma_i},\qquad
    B_i:=B_{\rho_i} \quad \mbox{and}\quad\widetilde B_i:=B_{\tilde\rho_i}.
\end{equation*}
  With this definition, we have $Q_{i+1} \subset \widetilde Q_i\subset
  Q_i\subset Q_0=Q_{2\rho,2\sigma}$ and $\dist(Q_i,\partial_{\mathrm{par}}\Omega_T)>0$ for all $i\in\N_0$.
  We write
  \begin{equation*}
      \AA:= \bigg(\mint_{Q_0}a^{-\alpha}\,\dx\dt\bigg)^{\frac{1}{\alpha}}
         =\frac{c\big\|a^{-1}\big\|_{L^\alpha}}{\big(\rho^n\sigma\big)^{\frac1\alpha}}
      \qquad\mbox{and}\qquad
      \BB:= \bigg(\mint_{Q_0}b^{\beta}\,\dx\dt\bigg)^{\frac{1}{\beta}}
      =\frac{c\|b\|_{L^\beta}}{(\rho^n\sigma)^{\frac1\beta}}
  \end{equation*}
  for the norms of $a^{-1}$ and $b$ taken with respect to mean integrals. 
  Then we choose the level $k$ according to 
  \begin{align}\label{choice-k}
      k:= \max\Bigg\{c&\bigg[\frac{\sigma}{\AA}\bigg(\frac{\AA\BB}{\rho}\bigg)^{\frac{p}{p+1-q}}\bigg]^{\vartheta_1}
      \bigg(\frac{\AA\BB}{\rho^{q-p}}\bigg)^{\vartheta_2}
      \bigg(\mint_{Q_0}u_+^m\,\d z\bigg)^{\vartheta_3},\cr
      &\bigg(\mint_{Q_0} u_+^m\,\d z\bigg)^{\frac1m},\bigg[\frac{\AA}{\sigma}\bigg(\frac{\rho}{\AA\BB}\bigg)^{\frac{p}{p+1-q}}\bigg]^{\frac{p+1-q}{p-2+2(q-p)}},\frac{\rho\mu^{p+1-q}}{\AA\BB},\frac{\rho}{(\AA\BB)^{\frac{1}{q-p}}}\Bigg\},
  \end{align}
  with constants $c,\vartheta_1,\vartheta_2,\vartheta_3$ that will be chosen later in dependence on $n,p,q,\alpha$ and $\beta$.
  For this value of $k$ we define
  \begin{equation*}
    k_i:= k-\tfrac1{2^i}k
    \qquad\mbox{for $i\in\N_0$.}
  \end{equation*}
  The Caccioppoli inequality from Lemma~\ref{lem:caccioppoli}
  on the cylinders $\widetilde Q_i\subset Q_i$ with the level
  $k_{i+1}$ gives the bound 
  \begin{align*}
      &\sup_{t\in (-\tilde\sigma_i,0)}\int_{\widetilde
        B_i\times\{t\}}\big(u-k_{i+1}\big)_+^2\d x
      +
      \big\|a^{-1}\big\|_{L^\alpha(Q_0)}^{-1}\,\bigg(\int_{\widetilde
        Q_i}\big|D\big(u-k_{i+1}\big)_+\big|^{p_\alpha}\d x\d t\bigg)^{\frac{\alpha+1}{\alpha}}\\
      &\qquad\le
      \frac{c\mu^{q-1}2^i}{\rho}\|b\|_{L^\beta(Q_0)}
    \left( \int_{Q_i}\big(u-k_{i+1}\big)_+^{\beta'}\,\dz\right)^\frac{1}{\beta'}\\
     &\qquad\qquad+
     \Bigg[\frac{c2^i}{\rho}\|b\|_{L^\beta(Q_0)}\big\|a^{-1}\big\|_{L^\alpha(Q_0)}^{\frac{q-1}{p}}
     \left(\int_{Q_i}\big(u-k_{i+1}\big)_+^{\gamma}\,\dz\right)^{\frac{1}{\gamma}}\Bigg]^{\frac{p}{p+1-q}} \cr
     &\qquad\qquad
     +\frac{c 2^{iq_\beta}\varepsilon}{\rho^{q_\beta}}
    \int_{Q_{i}}\big(u-k_{i+1}\big)_+^{q_\beta}\,\dz  +
     \frac{c2^{i}}{\sigma}\int_{Q_i} \big(u-k_{i+1}\big)_+^2\,\dz,
  \end{align*}
  for every $i\in\N_0$.
  We take means by dividing both sides of the estimate by $|Q_0|=c(n)\rho^n\sigma$. Recalling the facts $\rho_i,\tilde\rho_i\in[\frac\rho2,\rho]$ and $\sigma_i,\tilde\sigma_i\in[\frac\sigma2,\sigma]$ and the definition of $\gamma$  according to~\eqref{choice-gamma}, we deduce  
   \begin{align}\label{caccio-means}
      &\sup_{t\in (-\tilde\sigma_i,0)}\mint_{\widetilde
        B_i\times\{t\}}\frac{\big(u-k_{i+1}\big)_+^2}{\sigma}\,\d x
      +
      \AA^{-1}\bigg(\mint_{\widetilde
        Q_i}\big|D\big(u-k_{i+1}\big)_+\big|^{p_\alpha}\d x\d t\bigg)^{\frac{\alpha+1}{\alpha}}\cr
      &\qquad\le
      \frac{c\mu^{q-1}2^i}{\rho}\BB
    \left( \mint_{Q_i}\big(u-k_{i+1}\big)_+^{\beta'}\d z\right)^\frac{1}{\beta'}
     +
     \Bigg[\frac{c2^i}{\rho}\AA^{\frac{q-1}{p}}\BB
     \left(\mint_{Q_i}\big(u-k_{i+1}\big)_+^{\gamma}\d z\right)^{\frac{1}{\gamma}}\Bigg]^{\frac{p}{p+1-q}}  \cr
     &\qquad\qquad 
     +\frac{c 2^{iq_\beta}\varepsilon}{\rho^{q_\beta}}
    \mint_{Q_{i}}\big(u-k_{i+1}\big)_+^{q_\beta}\,\dz  +
     \frac{c2^{i}}{\sigma}\mint_{Q_i} \big(u-k_{i+1}\big)_+^2\d z.
  \end{align}
  Since we have $u\ge k_{i+1}$ on the domains
  of integration of the integrals in~\eqref{caccio-means}, we can use the estimate
  \begin{equation*}
    u-k_i\ge k_{i+1}-k_i=\tfrac{k}{2^{i+1}}.
  \end{equation*}
  Therefore, for every exponent $r<m:=\frac{p_\alpha(n+2)}{n}$, we obtain the bound
  \begin{equation}\label{r->m}
    \mint_{Q_i}\big(u-k_{i+1}\big)_+^{r}\,\dz
    \le
    \mint_{Q_i}\big(u-k_{i+1}\big)_+^{r}\Big[\tfrac{2^{i+1}}{k}(u-k_i)_+\Big]^{m-r}\,\dz
    \le
    \frac{c2^{i(m-r)}}{k^{m-r}}\mint_{Q_i}\big(u-
    k_i\big)_+^{m}\,\dz.
  \end{equation}
  Applying this estimate with the exponents $r\in\{\beta',\gamma,2,q_\beta\}$, we get 
\begin{align}\label{caccio-m}
      &\sup_{t\in (-\tilde\sigma_i,0)}\mint_{\widetilde
      B_i\times\{t\}}\frac{\big(u-k_{i+1}\big)_+^2}{\sigma}\,\d x
      +
      \AA^{-1}\bigg(\mint_{\widetilde
        Q_i}\big|D\big(u-k_{i+1}\big)_+\big|^{p_\alpha}\d x\d t\bigg)^{\frac{\alpha+1}{\alpha}}
        \cr
      &\qquad\le
      c\mu^{q-1}2^i\BB
    \left( \frac{2^{i(m-\beta')}}{\rho^{\beta'}k^{m-\beta'}} \mint_{Q_i}\big(u-k_i\big)_+^m\,\dz\right)^\frac{1}{\beta'}\cr
     &\qquad\qquad+
     \Bigg[c2^i\AA^{\frac{q-1}{p}}\BB
     \left(\frac{2^{i(m-\gamma)}}{\rho^\gamma k^{m-\gamma}}\mint_{Q_i}\big(u-k_i\big)_+^m\,\dz\right)^{\frac{1}{\gamma}}\Bigg]^{\frac{p}{p+1-q}}\cr &\qquad\qquad+
     \frac{c\varepsilon}{\rho^{q_\beta}}
   \frac{2^{im}}{k^{m-q_\beta}}\mint_{Q_i}\big(u-
    k_i\big)_+^{m}\,\dz +
     c\frac{2^{i(m-1)}}{\sigma k^{m-2}}\mint_{Q_i} \big(u-k_i\big)_+^m\,\dz\cr
     &\qquad=:\mathrm{R}_1.
\end{align}
Moreover, applying~\eqref{r->m} with $r=p_\alpha$, we obtain
\begin{equation}\label{lower-order-m}
     \mint_{\widetilde Q_i}\frac{2^{ip_\alpha}(u-k_{i+1})_+^{p_\alpha}}{\rho^{p_\alpha}}\,\dz
     \le
     \frac{c2^{im}}{\rho^{p_\alpha}k^{m-p_\alpha}}\mint_{Q_i} \big(u-k_i\big)_+^m\,\dz
     =:\mathrm{R}_2.
\end{equation}  
  Next, we apply Lemma \ref{lem:inter} to the functions $v:=\varphi (u-k_{i+1})_+$ for some cut-off function $\varphi(x)\in C^\infty_0(\tilde B_i,[0,1])$ with $\varphi=1$ in $B_{i+1}$ and $|D\varphi|\le\frac{c2^i}{\rho}$. In the next step, the result is estimated using~\eqref{caccio-m} and \eqref{lower-order-m}. This yields the estimate
  \begin{align*}
    &\mint_{Q_{i+1}}\big(u-k_{i+1}\big)_+^m\,\dz\\
    &\qquad\le
    c(\rho^n\sigma)^{\frac{p_\alpha}{n}}\left(\sup_{t\in(-\tilde\sigma_i,0)}\mint_{\widetilde B_i\times\{t\}}\frac{(u-k_{i+1})_+^2}{\sigma}\,\dx\right)^{\frac{p_\alpha}{n}}\mint_{\widetilde Q_i}\left(\left|D(u-k_{i+1})_+\right|^{p_\alpha}+\frac{2^{ip_\alpha}(u-k_{i+1})_+^{p_\alpha}}{\rho^{p_\alpha}}\right)\,\dz\cr
    &\qquad\le
    c(\rho^n\sigma)^{\frac{p_\alpha}{n}}\mathrm{R}_1^{\frac{p_\alpha}{n}}\Big(\AA^{\frac\alpha{\alpha+1}} \mathrm{R}_1^{\frac{\alpha}{\alpha+1}} +\mathrm{R}_2\Big) \cr
    &\qquad\le
    c(\rho^n\sigma)^{\frac{p_\alpha}{n}}\AA^{\frac\alpha{\alpha+1}}\Big(\mathrm{R}_1^{\frac{n+p}{n}\frac{\alpha}{\alpha+1}}
    +\big(\AA^{-\frac{\alpha}{\alpha+1}}\mathrm{R}_2\big)^{\frac{n+p}{n}}\Big).
  \end{align*}

  In the last step, we applied Young's inequality with exponents $\frac{n+p}{p}$ and $\frac{n+p}{n}$. Recalling the definitions of $\mathrm{R}_1$ and $\mathrm{R}_2$, we arrive at 
  \begin{align*}
     \mint_{Q_{i+1}}&\big(u-k_{i+1}\big)_+^m\,\dz\cr
     &\le
     c\lambda^i(\rho^n\sigma)^{\frac{p_\alpha}{n}}\AA^{\frac{\alpha}{\alpha+1}}\Bigg[
     \left(\frac{\BB^{\beta'}\mu^{(q-1)\beta'}}{\rho^{\beta'}k^{m-\beta'}} \mint_{Q_i}\big(u-k_i\big)_+^m\,\dz\right)^{\frac{1}{\beta'}\frac{n+p}{n}\frac{\alpha}{\alpha+1}}\cr
     &\qquad\qquad\qquad\qquad\quad+
     \left(\frac{\big(\AA^{\frac{q-1}{p}}\BB\big)^\gamma}{\rho^\gamma k^{m-\gamma}}\mint_{Q_i}\big(u-k_i\big)_+^m\,\dz\right)^{\frac{1}{\gamma}\frac{p}{p+1-q}\frac{n+p}{n}\frac{\alpha}{\alpha+1}}
     \cr
     &\qquad\qquad\qquad\qquad\quad+
      \bigg(\frac{c\varepsilon}{\rho^{q_\beta}}
   \frac{1}{k^{m-q_\beta}}\mint_{Q_i}\big(u-
    k_i\big)_+^{m}\,\dz \bigg)^{\frac{n+p}{n}\frac{\alpha}{\alpha+1}}\cr
     &\qquad\qquad\qquad\qquad\quad+
     \bigg(\frac{1}{\sigma k^{m-2}}\mint_{Q_i} \big(u-k_i\big)_+^m\,\dz\bigg)^{\frac{n+p}{n}\frac{\alpha}{\alpha+1}}\cr
     &\qquad\qquad\qquad\qquad\quad+
     \bigg(\frac{1}{\AA^{\frac{\alpha}{\alpha+1}}\rho^{p_\alpha}k^{m-p_\alpha}}\mint_{Q_i} \big(u-k_i\big)_+^m\,\dz\bigg)^{\frac{n+p}{n}}
     \Bigg]\\
     &=:c\lambda^i(\rho^n\sigma)^{\frac{p_\alpha}{n}}\AA^{\frac{\alpha}{\alpha+1}}\big(\mathrm{I}+\mathrm{II}+\mathrm{III}+\mathrm{IV}+\mathrm{V}\big),
  \end{align*}
  for a constant $\lambda>1$ that depends on $n,p,q,\alpha$ and $\beta$.
  By the choice of $k$ in~\eqref{choice-k}, we have 
  \begin{equation}\label{choice-k-1}
      \mu\le\bigg(\frac{\AA\BB k}{\rho}\bigg)^{\frac{1}{p+1-q}}
      \qquad\mbox{and}\qquad
      k^m\ge c\,\mint_{Q_i} \big(u-k_i\big)_+^m\,\dz,
  \end{equation}
  and therefore
  \begin{align*}
    \mathrm{I}
    &\le
    \bigg[\BB^{\beta'}\big(\AA\BB\big)^{\frac{(q-1)\beta'}{p+1-q}}\bigg(\frac{k}{\rho}\bigg)^{\frac{p\beta'}{p+1-q}}\frac{1}{k^m}
    \mint_{Q_i}\big(u-k_i\big)_+^m\,\dz\bigg]^{\frac{1}{\beta'}\frac{n+p}{n}\frac{\alpha}{\alpha+1}}\cr
    &\le
    c\left[\bigg(\frac{\AA^{\frac{q-1}{p}}\BB\,k}{\rho}\bigg)^\gamma\frac{1}{k^m}\mint_{Q_i}\big(u-k_i\big)_+^m\,\dz\right]^{\frac{1}{\gamma}\frac{p}{p+1-q}\frac{n+p}{n}\frac{\alpha}{\alpha+1}}
    =c\mathrm{II}.
  \end{align*}
  In the last step, we used the fact $\frac{1}{\beta'}\ge \frac{1}{\gamma}\frac{p}{p+1-q}$, which is a consequence of~\eqref{bounds-gamma}.
  In order to estimate $\mathrm{III}$, we choose 
  \begin{equation}\label{condition-eps}
     0\le\varepsilon\le\left(\AA^{\frac{q-1}{p}}\BB\right)^\frac{p}{p+1-  q}\left(\frac{k}{\rho}\right)^{\frac{p}{p+1-q}-q_\beta},
  \end{equation}    
  which specifies assumption~\eqref{assumption-eps}.
  For such values of $\varepsilon$, we can estimate 
  \begin{eqnarray}
   \mathrm{ III} &\le&  \bigg[\bigg(\frac{\AA^{\frac{q-1}{p}}\BB k}{\rho}\bigg)^{\frac{p}{p+1-q}}\frac{1}{k^m}\mint_{Q_i} \big(u-k_i\big)_+^m\,\dz\bigg]^{\frac{n+p}{n}\frac{\alpha}{\alpha+1}}\cr &\le&
     c\left[\bigg(\frac{\AA^{\frac{q-1}{p}}\BB\,k}{\rho}\bigg)^\gamma\frac{1}{k^m}\mint_{Q_i}\big(u-k_i\big)_+^m\,\dz\right]^{\frac{1}{\gamma}\frac{p}{p+1-q}\frac{n+p}{n}\frac{\alpha}{\alpha+1}}
     =
     c\mathrm{II},
  \end{eqnarray}
  since $\frac{1}{\gamma}\frac{p}{p+1-q}\le1$ according to~\eqref{bounds-gamma}.
  Similarly, we use the fact 
  \begin{equation*}
      \frac{k^2}{\sigma}\le \frac{1}{\AA}\bigg(\frac{\AA\BB k}{\rho}\bigg)^{\frac{p}{p+1-q}}
  \end{equation*}
  and \eqref{choice-k-1}$_2$ to estimate
  \begin{align*}
     \mathrm{IV}
     &\le
     \bigg[\frac{1}{\AA}\bigg(\frac{\AA\BB k}{\rho}\bigg)^{\frac{p}{p+1-q}}\frac{1}{k^m}\mint_{Q_i} \big(u-k_i\big)_+^m\,\dz\bigg]^{\frac{n+p}{n}\frac{\alpha}{\alpha+1}}\cr
     &\le
     c\left[\bigg(\frac{\AA^{\frac{q-1}{p}}\BB\,k}{\rho}\bigg)^\gamma\frac{1}{k^m}\mint_{Q_i}\big(u-k_i\big)_+^m\,\dz\right]^{\frac{1}{\gamma}\frac{p}{p+1-q}\frac{n+p}{n}\frac{\alpha}{\alpha+1}}
     =
     c\mathrm{II},
  \end{align*}
  where we used again $\frac{1}{\gamma}\frac{p}{p+1-q}\le1$.
  For the estimate of $\mathrm{V}$, we use the fact
  \begin{equation*}
    \bigg(\frac{k}{\rho}\bigg)^{q-p}\ge\frac{1}{\AA\BB}  
  \end{equation*}
  and~\eqref{choice-k-1}$_2$ to deduce 
  \begin{align*}
      \mathrm{V}
      &=
      \bigg[\frac{1}{\AA^{\frac{\alpha}{\alpha+1}}}
      \bigg(\frac{k}{\rho}\bigg)^{\frac{p_\alpha}{p+1-q}}
      \bigg(\frac{k}{\rho}\bigg)^{-(q-p)\frac{p_\alpha}{p+1-q}}
      \frac{1}{k^m}\mint_{Q_i} \big(u-k_i\big)_+^m\,\dz\bigg]^{\frac{n+p}{n}}\cr
     &\le
     c\left[\bigg(\frac{\AA^{\frac{q-1}{p}}\BB k}{\rho}\bigg)^\gamma\frac{1}{k^m}\mint_{Q_i}\big(u-k_i\big)_+^m\,\dz\right]^{\frac{1}{\gamma}\frac{p}{p+1-q}\frac{n+p}{n}\frac{\alpha}{\alpha+1}}
     =
     c\mathrm{II}.
  \end{align*}
  In the last step, we used the estimate $1\ge \frac{1}{\gamma}\frac{p}{p+1-q}\frac{\alpha}{\alpha+1}$, which follows from~\eqref{bounds-gamma}. Collecting the estimates, we arrive at the bound
  \begin{align*}
     \mint_{Q_{i+1}}\big(u-k_{i+1}\big)_+^m\,\dz
     &\le
     c\lambda^i(\rho^n\sigma)^{\frac{p_\alpha}{n}}\AA^{\frac{\alpha}{\alpha+1}}\cdot[\mathrm{II}]\cr
     &=
     c\lambda^i(\rho^n\sigma)^{\frac{p_\alpha}{n}}
     \AA^{\frac{\alpha}{\alpha+1}}\Bigg[\left(\frac{\big(\AA^{\frac{q-1}{p}}\BB\big)^\gamma}{\rho^\gamma k^{m-\gamma}}\mint_{Q_i}\big(u-k_i\big)_+^m\,\dz\right)^{\frac{1}{\gamma}\frac{p}{p+1-q}\frac{n+p}{n}\frac{\alpha}{\alpha+1}}\Bigg].
  \end{align*}
  Abbreviating
  \begin{equation*}
      X_i:=\mint_{Q_i}\big(u-k_i\big)_+^m\,\d z,\quad i\in\N_0,
  \end{equation*}
  and
  \begin{equation}\label{def-kappa}
    \kappa:=\frac1\gamma \frac{p}{p+1-q}\frac{n+p}{n}\frac{\alpha}{\alpha+1}-1>0,
  \end{equation}

  the preceding estimate becomes 
  \begin{equation*}
      X_{i+1}\le \frac{c\Big[\frac{\sigma}{\AA}\big(\frac{\AA\BB}{\rho}\big)^{\frac{p}{p+1-q}}\Big]^{\frac{p_\alpha}{n}}\Big(\frac{\AA\BB}{\rho^{q-p}}\Big)^{\frac{p_\alpha}{p+1-q}}}{k^{(m-\gamma)(1+\kappa)}}\lambda^i X_i^{1+\kappa}
  \end{equation*}
  for every $i\in\N_0$.
 The exponent $\kappa$ is positive since~\eqref{upper-bound-gamma} implies $\kappa>\frac{q-p}{p+1-q}\ge0$. Therefore, Lemma~\ref{lem:geom-conv} is applicable if we have 
  \begin{equation*}
      \mint_{Q_{2\rho,2\sigma}}u_+^m\,\d z
      =
      X_0\le c\Bigg(\bigg[\frac{\sigma}{\AA}\bigg(\frac{\AA\BB}{\rho}\bigg)^{\frac{p}{p+1-q}}\bigg]^{-\frac{p_\alpha}{n}}\bigg(\frac{\AA\BB}{\rho^{q-p}}\bigg)^{-\frac{p_\alpha}{p+1-q}} k^{(m-\gamma)(1+\kappa)}\Bigg)^{\frac1\kappa},
  \end{equation*}
  or equivalently,
  \begin{equation*}
      k^{(m-\gamma)(1+\kappa)}
      \ge 
      c\bigg[\frac{\sigma}{\AA}\bigg(\frac{\AA\BB}{\rho}\bigg)^{\frac{p}{p+1-q}}\bigg]^{\frac{p_\alpha}{n}}
      \bigg(\frac{\AA\BB}{\rho^{q-p}}\bigg)^{\frac{p_\alpha}{p+1-q}}
      \bigg(\mint_{Q_{2\rho,2\sigma}}u_+^m\,\d z\bigg)^\kappa,
  \end{equation*}
  with a constant $c$ depending on $n,p,q,\alpha$ and $\beta$.
  This is satisfied for our choice of $k$ in~\eqref{choice-k} if we define
  \begin{align*}
      \vartheta_1:=\frac{p_\alpha}{n(m-\gamma)(1+\kappa)},\quad
      \vartheta_2:=\frac{p_\alpha}{(p+1-q)(m-\gamma)(1+\kappa)}\quad\mbox{and}\quad
      \vartheta_3:=\frac{\kappa}{(m-\gamma)(1+\kappa)}.
  \end{align*}
  Note that these parameters are well defined since we have $\gamma<m$ by \eqref{upper-bound-gamma}. At this point we rely on the strict inequality in assumption~\eqref{gap}.
  With these choices of the parameters, Lemma~\ref{lem:geom-conv} implies
  \begin{equation*}
      \mint_{Q_{\rho,\sigma}}(u-k)_+^m\,\d z
      =
      \lim_{i\to\infty}\mint_{Q_i}\big(u-k_i\big)_+^m\,\d z=0,
  \end{equation*}
  proving $u\le k$ a.e. in $Q_{\rho,\sigma}$. This proves the asserted estimate.
\end{proof}

\section{Existence of a locally bounded variational solution}\label{sec:exist}

 In this section, we apply the local sup-bound from the preceding
 section to construct locally bounded variational solutions to
 Cauchy-Dirichlet problems, i.e. to give the 

\begin{proof}[Proof of Theorem \ref{thm:exist}]
  The proof is divided in several steps.

  \medskip
  
\emph{Step 1: Regularization. }
For some sequence $\eps_i\in(0,1]$ with $\eps_i\downarrow0$,
we define regularized integrands $f_i$ as
\begin{equation*}
  f_i(x,t,\xi):= f(x,t,\xi)+\eps_i|\xi|^{q_\beta}
  \qquad\mbox{for }\xi\in\R^{n},\ (x,t)\in\Omega_T.
\end{equation*}
As a consequence of assumptions \eqref{fip0} --
\eqref{fip2} on $f$, we have 
\begin{align}\label{feps0}
  0 \le
  f_i(x,t,\xi)
  &\le
  b(x,t)(\mu^2+|\xi|^2)^{\frac q2}+\eps_i|\xi|^{q_\beta},\\
\label{feps1}
|D_\xi f_i(x,t,\xi)|&\le b(x,t)(\mu^2+|\xi|^2)^{\frac{q-1}{2}}+c(q,\beta)\eps_i|\xi|^{q_\beta-1},\\
 \label{feps2}
  \langle D_\xi f_i(x,t,\xi),\xi\rangle
  &\ge
  a(x,t)(\mu^2+|\xi|^2)^{\frac{p-2}{2}}|\xi|^2+c(q,\beta)\eps_i|\xi|^{q_\beta},
\end{align}
for each $i\in\N$, a.e. $(x,t)\in\Omega_T$, and any $\xi\in\R^{n}$.
In particular, \eqref{feps1} and Young's inequality imply
\begin{equation*}
  |D_\xi f_i(x,t,\xi)|\le (1+c\eps_i)|\xi|^{q_\beta-1} + h(x,t)
\end{equation*}
with 
\begin{equation*}
    h(x,t):=c\Big(b(x,t)+b(x,t)^{\frac{q\beta-\beta+1}{q}}\Big)\in L^{q_\beta'}(\Omega).
\end{equation*}
Therefore and because of $a(x,t)\ge0$, assumptions \eqref{feps1} and \eqref{feps2} imply that the regularized functions $D_\xi f_i$ satisfy standard
$q_\beta$-growth and ellipticity conditions. Furthermore, the
initial data satisfy \eqref{initial-condition}, and assumption \eqref{lateral-condition-f} for the lateral boundary values implies 
\begin{equation*}
   g\in L^{q_\beta}(0,T;W^{1,q_\beta}(\Omega))
    \qquad\mbox{and}\qquad
    \partial_t g\in L^{q_\beta'}(0,T;W^{-1,q_\beta'}(\Omega)),
  \end{equation*}
  since the fact $p_\alpha<q_\beta$ implies
  $\gamma>q_\beta$. Therefore, classical theory
(see e.g. \cite[Section III.4]{Showalter}) yields the existence of unique solutions $u_i\in
g+L^{q_\beta}(0,T;W_0^{1,q_\beta}(\Omega))$ 
of the Cauchy-Dirichlet problems
\begin{equation}\label{diri}\begin{cases}
\partial_tu_i-\dive D_\xi f_i(x,t,Du_i)=0& \mathrm{in} \ \Omega_T,\\
u_i=g & \mathrm{on} \ \partial_{\mathrm{par}} \Omega_T,
\end{cases}
\end{equation}
for any $i\in\N$.

\medskip

\emph{Step 2: Energy bounds. }
As a direct consequence of the parabolic system in~\eqref{diri} and
assumption \eqref{lateral-condition-f} on the boundary values, we have 
$$\partial_tg,\ \partial_tu_i\in L^{q_\beta'}\big(0,T;W^{-1,q_\beta'}(\Omega)\big).$$ 
Hence, for any $t_0\in(0,T)$ we may use
$\varphi=(u_i-g)\chi_{(0,t_0)}\in L^{q_\beta}(0,T;W_0^{1,{q_\beta}}(\Omega))$
as test function in \eqref{diri}, with the result
\begin{align}\label{comparison1}
\mathrm{I}&+\mathrm{II}\\\nonumber
&:=
\int_0^{t_0}\langle \partial_t(u_i-g), u_i-g\rangle_{W^{-1,q_\beta'}}\,\dt
+\int_{\Omega_{t_0}}\langle
D_\xi f_i(x,t,Du_i),Du_i\rangle\,\d z\\
&
=-\int_0^{t_0}\langle \partial_tg, u_i-g\rangle_{W^{-1,q_\beta'}}\,\dt
+\int_{\Omega_{t_0}}\langle
D_\xi f_i(x,t,Du_i),Dg\rangle\,\d z\nonumber\\\nonumber
&=:\mathrm{III}+\mathrm{IV},
\end{align}
where $\langle\cdot,\cdot\rangle_{W^{-1,q_\beta'}}$ denotes the duality pairing
between $W^{-1,q_\beta'}(\Omega)$ and $W^{1,q_\beta}_0(\Omega)$.
The first integral can be re-written in a standard way
(cf. \cite[Prop. III.1.2]{Showalter}) as
\begin{align}\label{comparison2}
  \mathrm{I}&=\int_0^{t_0}\langle \partial_t(u_i-g),u_i-g\rangle_{W^{-1,q_\beta'}}\,\dt\cr
  &\qquad =
  \frac12\int_0^{t_0}\partial_t\|u_i-g\|_{L^2(\Omega)}^2\,\dt
  =
  \frac12\int_{\Omega\times\{t_0\}}|u_i-g|^2\,\dx,
\end{align}
where we used the initial condition $u_i=g$ at $t=0$
according to \eqref{diri}.
For the estimate of the second integral on the left-hand side of
\eqref{comparison1}, we use \eqref{feps2} to infer
\begin{align}\label{comparison2.5}
\mathrm{II}&=\int_{\Omega_{t_0}}\langle
D_\xi f_i(x,t,Du_i),Du_i\rangle\,\d z\\\nonumber
&\qquad\ge
\int_{\Omega_{t_0}}a(x,t)(\mu^2+|Du_i|^2)^{\frac{p-2}2}|Du_i|^2\,\d z
+c(q,\beta)\varepsilon_i\int_{\Omega_{t_0}}|Du_i|^{q_\beta}\,\d z\\\nonumber
&\qquad\ge \|a^{-1}\|_{L^\alpha}^{-1} \left(\int_{\Omega_{t_0}}|Du_i|^{p_\alpha}\,\d z\right)^{\frac{\alpha+1}{\alpha}}
+
c(q,\beta)\varepsilon_i\int_{\Omega_{t_0}}|Du_i|^{q_\beta}\,\d z.
\end{align}
For the estimate of $\mathrm{III}$, we use the fact $\partial_t g\in
L^{p_{\alpha}'}(0,T;W^{-1,p_{\alpha}'}(\Omega))$ that holds according to~\eqref{lateral-condition-f}. This allows us to estimate
\begin{align}\label{comparison2.6}
  \mathrm{III}
  &\le
  \int_0^{t_0}\|Du_i-Dg\|_{L^{p_{\alpha}}(\Omega)}\,\|\partial_t g\|_{W^{-1,p_{\alpha}'}(\Omega)}\,\dt\\\nonumber
  &\le
  \|a^{-1}\|_{L^\alpha}^{-\frac{1}{p}}\|a^{-1}\|_{L^\alpha}^{\frac{1}{p}}
  \left(\int_{\Omega_{t_0}}|Du_i-Dg|^{p_\alpha}\,\d z\right)^{\frac{1}{p_\alpha}}
  \|\partial_t
    g\|_{L^{p_{\alpha}'}\mbox{-}W^{-1,p_{\alpha}'}}\\\nonumber
  &\le 
    \frac{\|a^{-1}\|_{L^\alpha}^{-1}}{4}\left(\int_{\Omega_{t_0}}|Du_i|^{p_{\alpha}}\,\d z\right)^{\frac{\alpha+1}{\alpha}}
    +\frac{\|a^{-1}\|_{L^\alpha}^{-1}}{4}\left(\int_{\Omega_{t_0}}|Dg|^{p_{\alpha}}\,\d z\right)^{\frac{\alpha+1}{\alpha}}\\\nonumber
    &\qquad+ c(p,\alpha)\|a^{-1}\|_{L^\alpha}^{\frac{1}{p-1}}\|\partial_t g\|^{p'}_{L^{p_{\alpha}'}\mbox{-}W^{-1,p_{\alpha}'}}.
\end{align}
Finally, using \eqref{feps1} and H\"older's inequality with the three
exponents $\beta$, $\frac{p_\alpha}{q-1}$ and $\gamma$ and then Young's inequality, we deduce
\begin{align}\label{comparison 2.8}
  \mathrm{IV}
  &\le
    \int_{\Omega_{t_0}} |D_\xi f_i(x,t,Du_i)| |Dg|\,\dz\\\nonumber
  &\le
    \int_{\Omega_{t_0}}b(x,t)(\mu+|Du_i|)^{q-1}|Dg|\,\d z
    +c(q,\beta)\eps_i\int_{\Omega_{t_0}}|Du_i|^{q_\beta-1}|Dg|\,\d
    z\\\nonumber
  &\le
    c(q)\|b\|_{L^\beta} \|a^{-1}\|_{L^\alpha}^{\frac{q-1}{p}}\|a^{-1}\|_{L^\alpha}^{-\frac{q-1}{p}}\bigg(\int_{\Omega_{t_o}}|Du_i|^{p_\alpha}\d
    z\bigg)^{\frac{q-1}{p_\alpha}}\bigg(\int_{\Omega_{t_o}}|Dg|^{\gamma}\d
    z\bigg)^{\frac1\gamma}\\\nonumber
    &\qquad+c(q)\mu^{q-1}\|b\|_{L^\beta}\bigg(\int_{\Omega_{t_o}}|Dg|^{\beta'}\d
    z\bigg)^{\frac{1}{\beta'}}
     +c(q,\beta)\eps_i\bigg(\int_{\Omega_{t_o}}|Du_i|^{q_\beta}\d
    z\bigg)^{\frac{1}{q_\beta'}}\bigg(\int_{\Omega_{t_o}}|Dg|^{q_\beta}\d
    z\bigg)^{\frac{1}{q_\beta}}
  \\\nonumber
  &\le
  \frac{\|a^{-1}\|_{L^\alpha}^{-1}}{4}\left(\int_{\Omega_{t_0}}|Du_i|^{p_{\alpha}}\,\d z\right)^{\frac{\alpha+1}{\alpha}}+\frac{c(q,\beta)\varepsilon_i}{2}\int_{\Omega_{t_0}}|Du_i|^{q_\beta}\,\d z
 \\\nonumber
  &\quad+ c(p,q)\|b\|_{L^\beta}^{\frac{p}{p+1-q}}\|a^{-1}\|_{L^\alpha}^{\frac{q-1}{p+1-q}}\|Dg\|^{\frac{p}{p+1-q}}_{L^\gamma}
    +
    c(q)\mu^{q-1}\|b\|_{L^\beta}\|Dg\|_{L^{\beta'}}
    +c(q,\beta)\varepsilon_i\|Dg\|_{L^{q_\beta}}^{q_\beta}
  .
\end{align}

We note that the three last terms are finite by~\eqref{lateral-condition-f} and \eqref{upper-bound-gamma}. 
Plugging \eqref{comparison2}, \eqref{comparison2.5},
\eqref{comparison2.6} and \eqref{comparison 2.8} into
\eqref{comparison1} and reabsorbing the integrals involving the spatial
derivatives, we arrive at 
 \begin{eqnarray}\label{comparison3}
&&\frac12\int_{\Omega\times\{t_0\}}|u_i-g|^2\dx+ \frac{\|a^{-1}\|_{L^\alpha}^{-1}}{2}\left(\int_{\Omega_{t_0}}|Du_i|^{p_{\alpha}}\,\d z\right)^{\frac{\alpha+1}{\alpha}}
+\frac{c(q,\beta)\varepsilon_i}{2}\int_{\Omega_{t_0}}|Du_i|^{q_\beta}\,\d z\\\nonumber
&&\qquad\quad\le  
c_{a,b}\Big(\|\partial_t g\|^{p'}_{L^{p_{\alpha}'}\mbox{-}W^{-1,p_{\alpha}'}}
   +\|Dg\|_{L^{p_\alpha}}^p
   +\|Dg\|^{\frac{p}{p+1-q}}_{L^\gamma}
   +\mu^{q-1}\|Dg\|_{L^{\beta'}}
  +
  \varepsilon_i\|Dg\|_{L^{q_\beta}}^{q_\beta}\Big)
\end{eqnarray}
for every $t_0\in(0,T]$, with a constant $c_{a,b}$ depending on
$p,q,\alpha,\beta,\|a^{-1}\|_{L^\alpha}$ and $\|b\|_{L^\beta}$. Taking the supremum over $t_0\in(0,T]$, we
arrive at the energy bound
\begin{align}\label{energy-bound}
  \sup_{t\in[0,T]}\int_{\Omega\times\{t\}}|u_i|^2\dx+\left(\int_{\Omega_{T}}|Du_i|^{p_{\alpha}}\,\d z\right)^{\frac{\alpha+1}{\alpha}}+\varepsilon_i\int_{\Omega_{T}}|Du_i|^{q_\beta}\,\d z
  \le
  c_{a,b} \big(M_g+
  \varepsilon_i\|Dg\|_{L^{q_\beta}}^{q_\beta}
  \big)
\end{align}
for every $i\in\N$, where we used the abbreviation 
\begin{equation}\label{abb}
  M_{g}:= \|\partial_t
  g\|^{p'}_{L^{p_{\alpha}'}\mbox{-}W^{-1,p_{\alpha}'}}
  +\|g\|_{L^{p_\alpha}-W^{1,p_\alpha}}^p
  +\|Dg\|^{\frac{p}{p+1-q}}_{L^\gamma}
   +\mu^{q-1}\|Dg\|_{L^{\beta'}}
  +\|g\|_{L^2-L^\infty}^2.
\end{equation}
Moreover, applying Poincar\'e's inequality to $(u_i-g)(\cdot,t)$ for
a.e. $t\in(0,T)$, we deduce
\begin{align}
  \label{energy-bound-2}
  \int_{\Omega_T}|u_i|^{p_{\alpha}}\,\d z
  &\le
  c(p,\alpha,\mathrm{diam}(\Omega))\int_{\Omega_T}|Du_i-Dg|^{p_{\alpha}}\,\d z+c(p,\alpha)\int_{\Omega_T}|g|^{p_{\alpha}}\,\d z\\\nonumber
  &\le
  C \big(M_g+
  \varepsilon_i\|Dg\|_{L^{q_\beta}}^{q_\beta}
    \big)^{\frac{\alpha}{\alpha+1}}
\end{align}
for every $i\in\N$, with a constant $C$ depending on 
$p,q,\alpha,\beta,\|a^{-1}\|_{L^\alpha},\|b\|_{L^\beta}$ and $\mathrm{diam}(\Omega)$. 
In view of the preceding energy bounds, after passing to a subsequence
we can achieve weak convergence to some limit map $ u\in
g+L^{p_\alpha}(0,T;W^{1,p_\alpha}_0(\Omega))$ in the sense
\begin{equation}
  \label{weak-converge}
  \left\{
  \begin{array}{cl}
    Du_i\wto D u&\mbox{weakly in }L^{p_{\alpha}}(\Omega_T,\R^{n}),\\[0.5ex]
    u_i\wto  u&\mbox{weakly in }L^{p_{\alpha}}(\Omega_T),\\[0.5ex]
    u_i\wsto u&\mbox{weakly* in $L^\infty(0,T;L^2(\Omega)),$}
  \end{array}\right.
\end{equation}
in the limit $i\to\infty$. The lower semicontinuity of the norms with respect to these convergences and the bounds \eqref{energy-bound} and \eqref{energy-bound-2} imply
\begin{equation}\label{energy-bound-limit}
  \sup_{t\in[0,T]}\int_{\Omega\times\{t\}}|u|^2\dx+\left(\int_{\Omega_{T}}|Du|^{p_{\alpha}}+|u|^{p_\alpha}\,\d z\right)^{\frac{\alpha+1}{\alpha}}
  \le CM_g.
\end{equation}

\emph{Step 3: Weak continuity in time. }
Testing the weak formulation of the
parabolic system~\eqref{diri} with a function $\varphi\in
C^\infty_0(\Omega_T)$ and exploiting the growth condition
\eqref{feps1}, we infer
\begin{align*}
  \bigg|\int_{\Omega_T}u_i\partial_t\varphi\,\d z\bigg|
  &\le
  \int_{\Omega_T}b(x,t)(1+|Du_i|)^{q-1}|D\varphi|\,\d z+c\varepsilon_i\int_{\Omega_T}|Du_i|^{q_\beta-1}|D\varphi|\,\d z.
\end{align*}
Now, since assumption \eqref{gap} implies $q-1<p_\alpha\frac{\beta-1}{\beta}$, we may use H\"older's inequality to obtain 
\begin{align*}
  \bigg|\int_{\Omega_T}u_i\partial_t\varphi\,\d z\bigg|
  &\le
  \|b\|_{L^\beta}\,|\spt\varphi|^{\frac{\beta-1}\beta-\frac{q-1}{p_\alpha}}
  \bigg(\int_{\Omega_T}(1+|Du_i|)^{p_\alpha}\,\d z\bigg)^{\frac{q-1}{p_\alpha}}\|D\varphi\|_{L^\infty(\Omega_T)}\\
  &\qquad+
  c\varepsilon_i\,|\spt\varphi|^{\frac{1}{q_\beta}}
  \bigg(\int_{\Omega_T}|Du_i|^{q_\beta}\,\d z\bigg)^{\frac{q_\beta-1}{q_\beta}}\|D\varphi\|_{L^\infty(\Omega_T)}\\
  &\le
  C \Big(|\spt\varphi|^{\frac{\beta-1}\beta-\frac{q-1}{p_\alpha}}+|\spt\varphi|^{\frac{1}{q_\beta}}\Big)\|D\varphi\|_{L^\infty(\Omega_T)}.
\end{align*}
In the last line we used the energy estimate~\eqref{energy-bound}, and the constant $C$ depends on $n,p,q,\alpha,\beta, \Omega,M_{g}, \|a^{-1}\|_{L^\alpha}$ and $\|b\|_{L^\beta}$.  
Now we choose a test function of the form
$\varphi(x,t)=\chi_\delta(t)\psi(x)$, where $\psi\in
C^\infty_0(\Omega)$, and $\chi_\delta$ is a suitable smooth function
that approximates $\chi_{(t_1,t_2)}$ for times $0\le t_1<t_2\le T$. Letting
$\delta\downarrow0$, the preceding estimate implies
\begin{align*}
  \bigg|\int_\Omega(u_i(x,t_2)-u_i(x,t_1))\psi(x)\,\dx\bigg|
  &\le
  C\Big(|t_2-t_1|^{\frac{\beta-1}\beta-\frac{q-1}{p_\alpha}}+|t_2-t_1|^{\frac{1}{q_\beta}}\Big)\|D\psi\|_{L^\infty(\Omega)}\\
  &\le
  C\Big(|t_2-t_1|^{\frac{\beta-1}\beta-\frac{q-1}{p_\alpha}}+|t_2-t_1|^{\frac{1}{q_\beta}}\Big)\|\psi\|_{W^{\ell,2}_0(\Omega)}
\end{align*}
for every $i\in\N$ and any $\ell\in\N$ with
$\ell>\frac{n+2}2$, where we used the Sobolev embedding
$W^{\ell,2}_0(\Omega)\subset W^{1,\infty}(\Omega)$ for the last
estimate. 
Therefore, we have
$u_i\in C^0([0,T];W^{-\ell,2}(\Omega))$ with the uniform estimate 
\begin{align*}
  \|u_i(\cdot,t_2)-u_i(\cdot,t_1)\|_{W^{-\ell,2}(\Omega)}\le C\Big(|t_2-t_1|^{\frac{\beta-1}\beta-\frac{q-1}{p_\alpha}}+|t_2-t_1|^{\frac{1}{q_\beta}}\Big)
\end{align*}
for any $t_1,t_2\in[0,T]$. Since, furthermore, the sequence $u_i$ is
bounded in the space $L^\infty(0,T;L^2(\Omega))$ by
\eqref{energy-bound}, 
the compactness result \cite[Thm. A.2]{BDM:pq} implies
\begin{equation}
  \label{slicewise-converge}
  u_i(\cdot,t)\wto  u(\cdot,t)
  \qquad\mbox{weakly in $L^2(\Omega)$ in the limit $i\to\infty$, for every $t\in [0,T]$,} 
\end{equation}
and $ u\in C_w([0,T];L^2(\Omega))$, cf. also \cite[Thm. 2.1]{Strauss}.\\

\emph{Step 4: Application of the sup-bound. }
Let $K\subset\Omega\times(0,T]$ be an arbitrary compact subset and $z_o=(x_o,t_o)\in K$.
For
$\rho:=\min\{1,\tfrac14\dist_{\mathrm{par}}^{(p,q)}(K,\partial_{\mathrm{par}}\Omega_T)\}$
we consider the cylinders
$$
  Q:=B\times\Lambda:=Q_{\rho}^{(p,q)}(z_o)
  \qquad\mbox{and}\qquad
  \widehat Q:=\widehat B\times\widehat\Lambda:= Q_{2\rho}^{(p,q)}(z_o),
$$
as defined in~\eqref{pq-cyl}. By
choice of $\rho$ we have 
$\dist_{\mathrm{par}}^{(p,q)}(\widehat Q,\partial_{\mathrm{par}}\Omega_T)>0$. We apply the
interpolation inequality from Lemma~\ref{lem:inter} to the function
$\varphi u_i$, where $\varphi(x)\in C^\infty_0(B_{2\rho}(x_o))$ denotes a
standard cut-off function with $\varphi\equiv1$ in $B_{\rho}(x_o)$
and $|D\varphi|\le \frac{2}{\rho}$. In this way, we infer
\begin{align*}
    \mint_{Q}\left|u_i\right|^{m}\dz\leq
  c(n,p,\alpha)\left(\sup_{t\in\widehat\Lambda}\,\int_{\widehat B}\left|u_i(x,t)\right|^{2}\dx\right)^{\frac{p_\alpha}{n}}\mint_{\widehat Q}\left(|Du_i|^{p_\alpha}
    +
    \frac{|u_i|^{p_\alpha}}{\rho^{p_\alpha}}\right)\dz,
\end{align*}
where we used the abbreviation $m:=\frac{p_\alpha(n+2)}{n}$. 
The energy bounds \eqref{energy-bound} and \eqref{energy-bound-2} imply that the right-hand side is bounded independently of $i\in\N$. 
More precisely, we obtain 
\begin{equation}
    \label{ui-Lm}
    \mint_Q |u_i|^m\,\dz
    \le
    C\big(M_g+\varepsilon_i\|Dg\|_{L^{q_\beta}}^{q_\beta}\big)^{\frac{n+p}{n}\frac{\alpha}{\alpha+1}}
\end{equation}
  for every $i\in\N$, with a constant $C$ depending on 
$n,p,q,\alpha,\beta,\|a^{-1}\|_{L^\alpha},\|b\|_{L^\beta}$ and $\mathrm{diam}(\Omega)$.
We use the abbreviation 
\begin{align*}
    k_i
    :=
    \mathrm{C}_{a,b}\max\left\{\frac{1}{\rho^{(q-p)\vartheta_2}}\bigg(\mint_{Q} (u_i)_+^m\,\d z\bigg)^{\vartheta_3},\bigg(\mint_{Q} (u_i)_+^m\,\d z\bigg)^{\frac1m},1\right\}.
\end{align*}
This corresponds to the definition of $k$ in Proposition~\ref{prop:apriori}
for the present case $\sigma=\rho^{\frac{p}{p+1-q}}$ and $\rho\le1$. The positive exponents $\vartheta_j=\vartheta_j(n,p,q,\alpha,\beta)$, $j\in\{2,3\}$, and the constant $\mathrm{C}_{a,b}=\mathrm{C}_{a,b}(n,p,q,\alpha,\beta,\mint_Q a^{-\alpha},\mint_Qb^\beta)$ are introduced in Proposition~\ref{prop:apriori}.
The upper bound~\eqref{ui-Lm} and the definition of $k_i$ imply 
\begin{equation}\label{bound-ki}
    0<\mathrm{C}_{a,b}\le k_i\le C_K\big(1+M_g+
  \varepsilon_i\|Dg\|_{L^{q_\beta}}^{q_\beta}
    \big)^\vartheta
    \qquad\mbox{for all }i\in\N,
\end{equation}
with a constant $C_K$ depending on $n,p,q,\alpha,\beta,\mint_Q
a^{-\alpha},\mint_Qb^\beta$, $\mathrm{diam}(\Omega)$ and
$\dist_{\mathrm{par}}^{(p,q)}(K,\partial_{\mathrm{par}}\Omega_T)$ and
an exponent $\vartheta=\vartheta(n,p,q,\alpha,\beta)>0$. 
Because of this and $\varepsilon_i\to0$ as $i\to\infty$, we can find an index $i_o(K)\in\N$ with 
\begin{align}\label{small-eps}
    \varepsilon_i<\widetilde{\mathrm{C}}_{a,b}\left(\frac{k_i}{\rho}\right)^{\frac{p}{p+1-q}-q_\beta}
    \qquad\mbox{for any }i\ge i_o(K),
\end{align}
with the constant $\widetilde{\mathrm{C}}_{a,b}=\widetilde{\mathrm{C}}_{a,b}(n,p,q,\alpha,\beta,\mint_Q a^{-\alpha},\mint_Qb^\beta)$ from Proposition~\ref{prop:apriori}. 
Because of~\eqref{small-eps}, we can apply the sup-bound from Proposition~\ref{prop:apriori} to $u_i$ for $i\ge i_o(K)$, which implies 
\begin{equation*}
  \esssup_{\frac12 Q}u_i\le k_i
  \qquad\mbox{for all }i\ge i_o(K).
\end{equation*}
Recalling that the center of $\frac12Q$ is an arbitrary point $z_o\in
K$ and using~\eqref{bound-ki}, we deduce 
\begin{equation*}
  \esssup_{K}u_i\le C_K\big(1+M_g+
  \varepsilon_i\|Dg\|_{L^{q_\beta}}^{q_\beta}
    \big)^\vartheta
     \qquad\mbox{for all }i\ge i_o(K).
   \end{equation*}
Arguing analogously for $-u_i$ instead of $u_i$, we even obtain
\begin{equation*}
  \esssup_{K}|u_i| \le C_K\big(1+M_g+
  \varepsilon_i\|Dg\|_{L^{q_\beta}}^{q_\beta}\big)^\vartheta
  \qquad\mbox{for all }i\ge i_o(K),
\end{equation*}
and letting $i\to\infty$, we arrive at the bound
\begin{equation}\label{local-bound-u-proof}
  \esssup_{K}|u|
  \le C_K(1+M_g)^\vartheta.
\end{equation}
This yields the local bound \eqref{local-bound-u} stated in Theorem~\ref{thm:exist}.

\medskip

\emph{Step 5: Variational inequality for the limit map. }
For any $\varphi\in L^{q_\beta}(0,\tau;W^{1,q_\beta}_0(\Omega))$, where
$\tau\in(0,T]$, the weak formulation of system~\eqref{diri} and
the convexity of $\xi\mapsto f(x,t,\xi)$ imply
\begin{align*}
  0&=
  \int_0^\tau\langle\partial_t u_i,\varphi\rangle_{W^{-1,q_\beta'}}\,\dt
  +
  \int_{\Omega_\tau}\langle D_\xi f_i(x,t,Du_i),D\varphi\rangle\,\d z\\
  &\le
  \int_0^\tau\langle\partial_t u_i,\varphi\rangle_{W^{-1,q_\beta'}}\,\dt
  +
  \int_{\Omega_\tau}\big[f_i(x,t,Du_i+D\varphi)-f_i(x,t,Du_i)\big]\,\d z.
\end{align*}
We choose $\varphi=v-u_i$ for some comparison map
\begin{equation*}
  v\in g+L^{q_\beta}(0,T;W^{1,q_\beta}_0(\Omega))
  \mbox{\quad with\quad}
  \partial_t v\in L^{p_\alpha'}(0,T;W^{-1,p_\alpha'}(\Omega)).
\end{equation*}
With this choice, the preceding
inequality can be rewritten to 
\begin{align}\label{var-ineq-uk}
  &\int_0^\tau\langle\partial_t v,v-u_i\rangle_{W^{-1,p_\alpha'}}\,\dt
  +
  \int_{\Omega_\tau}\big[f_i(x,t,Dv)-f_i(x,t,Du_i)\big]\,\d z\\\nonumber
  &\qquad\ge
    \int_0^\tau\langle\partial_t
    (v-u_i),v-u_i\rangle_{W^{-1,q_\beta'}}\,\dt\\\nonumber
  &\qquad=
   \tfrac12\int_\Omega |(v-u_i)(\tau)|^2\,\dx
   -
   \tfrac12\int_\Omega |(v-g)(0)|^2\,\dx.
\end{align}
From the weak convergence
\eqref{weak-converge}, \eqref{slicewise-converge}, we deduce
\begin{equation*}
   \int_{\Omega_\tau}f(x,t,Du)\,\d z
   \le
   \liminf_{i\to\infty}\int_{\Omega_\tau}f(x,t,Du_i)\,\d z  
   \le 
   \liminf_{i\to\infty}\int_{\Omega_\tau}f_i(x,t,Du_i)\,\d z
\end{equation*}
by the convexity of $\xi\mapsto f(x,t,\xi)$, and
\begin{equation*}
  \int_\Omega |(v-u)(\tau)|^2\,\dx
  \le
  \liminf_{i\to\infty}\int_\Omega |(v-u_i)(\tau)|^2\,\dx
\end{equation*}
by the weak lower semicontinuity of the $L^2(\Omega)$-norm with respect to weak convergence. 
Moreover, because of $Dv\in L^{q_\beta}(\Omega_T)$ and $\eps_i\downarrow0$ we have 
\begin{equation*}
    \int_{\Omega_\tau}f(x,t,Dv)\,\d z
    =
    \lim_{i\to\infty}\int_{\Omega_\tau}f_i(x,t,Dv)\,\d z.
\end{equation*}
Using this together with the weak convergence~\eqref{weak-converge}, by passing to the limit in \eqref{var-ineq-uk} we deduce
\begin{align}\label{var-ineq-limit}
  &\tfrac12\int_\Omega |(v-u)(\tau)|^2\,\dx
  +
  \int_{\Omega_\tau}f(x,t,Du)\,\d z\\\nonumber
  &\qquad\le
  \int_{\Omega_\tau}f(x,t,Dv)\,\d z
  +   
  \int_0^\tau\langle\partial_t v,v-u\rangle_{W^{-1,p_\alpha'}}\,\dt
  +
  \tfrac12\int_\Omega |(v-g)(0)|^2\,\dx,
\end{align}
for any $\tau\in[0,T]$ and $v\in 
g+L^{q_\beta}(0,\tau;W^{1,q_\beta}_0(\Omega))$ with $\partial_t v\in
L^{p_\alpha'}(0,T;W^{-1,p_\alpha'}(\Omega))$.
This proves that $u$ is a variational solution of~\eqref{diri} in the
sense of Definition~\ref{def:var-sol}. Since it is also
locally bounded according to \eqref{local-bound-u-proof}, the proof of Theorem~\ref{thm:exist} is complete. 

\end{proof}


\begin{thebibliography}{99}

\bibitem{bdgp}
A.~Balci, L.~Diening, R.~Giova, A.~Passarelli di Napoli, 
\textit{Elliptic equations with degenerate weights. }
SIAM J. Math. Anal. 54 (2022), no. 2, 2373--2412.

\bibitem{Bella-Schaeffner:2021}  
P.~Bella, M.~Sch\"affner,
\textit{Local boundedness and Harnack inequality for solutions of linear nonuniformly elliptic equations. }
Comm. Pure Appl. Math. 74 (2021), no. 3, 453--477.

\bibitem{Bella-Schaeffner:2023}  
P.~Bella, M.~Sch\"affner,
\textit{Local boundedness for p-Laplacian with degenerate coefficients}.
Math. Eng. 5 (2023), no. 5, Paper No. 081, 20 pp.  

\bibitem{BDM:pq-equations} 
V.~B\"ogelein, F.~Duzaar and P.~Marcellini,
\textit{Parabolic equations with p,q-growth. }
J. Math. Pures Appl. (9) 100 (2013), no. 4, 535--563.
  
\bibitem{BDM:pq} 
V.~B\"ogelein, F.~Duzaar and P.~Marcellini, 
\textit{Parabolic systems with $p,q$-growth: a variational approach. }  
Arch. Ration. Mech. Anal. 210 (2013), no. 1, 219--267.

\bibitem{BDM:variational} 
V.~B\"ogelein, F.~Duzaar and P.~Marcellini, 
\textit{Existence of evolutionary variational solutions via the calculus of variations}
J. Differential Equations 256 (2014), no. 12, 3912--3942.

\bibitem{CMM}
  G.~Cupini, P.~Marcellini, E.~Mascolo, 
  \textit{Nonuniformly elliptic energy integrals with p,q-growth.}
  Nonlinear Anal. 177 (2018), 312--324.
  
\bibitem{CMMP}
  G.~Cupini, P.~Marcellini, E.~Mascolo, A.~Passarelli di Napoli,
 \textit{Lipschitz regularity for degenerate elliptic
integrals with $(p,q)$-growth.} Adv. Calc. Var.  16, (2023), no.2,  443--465.

\bibitem{DeFilippis}
C.~De Filippis, 
\textit{Gradient bounds for solutions to irregular parabolic equations with $(p,q)$-growth. }
Calc. Var. Partial Differential Equations 59 (2020), no. 5, Paper No. 171, 32 pp.

\bibitem{DeFilippis-Mingione1}
C.~De Filippis, G. Mingione,
\textit{Lipschitz bounds and nonautonomous integrals. }
Arch. Rational Mech. Anal. 242 (2021): 973--1057.

\bibitem{DeFilippis-Mingione2}
C.~De Filippis, G. Mingione,
\textit{On the regularity of minima of non-autonomous functionals.}
J. Geom. Anal. 30 (2020): 1584--1626. 

\bibitem{DeFilippis-Piccinini}
C.~De Filippis, M.~Piccinini,
\textit{Borderline global regularity for nonuniformly elliptic systems. }
Int. Math. Res. Not. IMRN 2023, no. 20, 17324--17376.
  
\bibitem{DiBenedettobook} 
E.~DiBenedetto,
\textit{Degenerate parabolic equations. }
Universitext, Springer-Verlag, New York, 1993. 

\bibitem{fabes-kenig-serapioni}
E.B.~Fabes, C.E.~Kenig, R.P.~Serapioni, 
\textit{The local regularity of solutions of degenerate elliptic equations. }
Comm. Partial Differential Equations 7 (1982), no. 1, 77--116.

\bibitem{GPS}
F.~Giannetti, A.~Passarelli di Napoli, C.~Scheven, 
 \textit{On higher differentiability of solutions of parabolic systems with discontinuous coefficients and $(p,q)$-growth.} Proc. Royal Soc.  Edinburgh 150 (2020),  419--451.

\bibitem{Giusti}
E.~Giusti,
\textit{Direct Methods in the Calculus of Variations.}
World Scientific, Singapore, 2003.

\bibitem{Hirsch-Schaeffner}
  J.~Hirsch, M.~Sch\"affner,
  \textit{Growth conditions and regularity, an optimal local boundedness result.}
Commun. Contemp. Math. 23 (2021), no. 3, Paper No. 2050029, 17 pp.

\bibitem{Lichnewsky-Temam:1978}
A. Lichnewsky, R. Temam,
\textit{Pseudosolutions of the time-dependent minimal surface
problem.}
J. Differential Equations 30 (1978), no. 3, 340--364.

\bibitem{mar89}
P. Marcellini, \textit{ Regularity of minimizers of integrals of the calculus of variations with nonstandard growth conditions}. Arch. Ration. Mech. Anal. 105 (1989), 267--284.

\bibitem{mar91}
P. Marcellini, \textit{Regularity and existence of solutions of elliptic equations with p, q-growth conditions.} J. Differ. Equ. 90 (1991), 1--30.

\bibitem{mar20-2}P. Marcellini,  \textit{ Growth conditions and regularity for weak solutions to nonlinear elliptic PDEs.}
{ J. Math.  Anal.   Appl.} 501 (2021), 124408.

\bibitem{dark-survey}
G.~Mingione, 
\textit{Regularity of minima: an invitation to the dark side of the calculus of variations. }
Appl. Math. 51 (2006), no. 4, 355--426.

\bibitem{Mingione-Radulescu} G. Mingione, V. R\u{a}dulescu, \textit{Recent
developments in problems with nonstandard growth and nonuniform ellipticity.} 
J. Math. Anal. Appl. 501 (2021), no. 1, Paper No. 125197, 41 pp.

\bibitem{pingen}
M.~Pingen, \textit{Regularity results for degenerate elliptic systems. }
Ann. Inst. H. Poincar\'e C Anal. Non Lin\'eaire 25 (2008), no. 2, 369--380.

\bibitem{Showalter}
R.~Showalter,
\textit{Monotone Operators in Banach Space and
Nonlinear Partial Differential Equations.}
Am. Math. Soc., Providence, RI, 1997.

\bibitem{singer3}
T. Singer,
\textit{Existence of weak solutions of parabolic systems with $p,q$-growth. }
Manuscripta Math. 151 (2016), no. 1-2, 87--112.

\bibitem{singer2}
T. Singer,
\textit{Local boundedness of variational solutions to evolutionary problems with non-standard growth. }
NoDEA Nonlinear Differential Equations Appl. 23 (2016), no. 2, Art. 19, 23 pp.

\bibitem{singer1}
T. Singer,
\textit{Parabolic equations with $p,q$-growth: the subquadratic case. }
Q. J. Math. 66 (2015), no. 2, 707--742.

\bibitem{Strauss}
W.~Strauss, 
\textit{On continuity of functions with values in various Banach spaces. }
Pacific J. Math. 19 (1966), 543--551.

\bibitem{Trudinger:1971}
 N.S.~Trudinger,
 \textit{On the regularity of generalized solutions of linear, non-uniformly elliptic equations. }
Arch. Rational Mech. Anal. 42 (1971), 50--62.

\bibitem{trud2}
 N.S.~Trudinger,
 \textit{Linear elliptic operators with measurable coefficients. }
Ann. Scuola Norm. Sup. Pisa Cl. Sci. (3) 27 (1973), 265--308.

\end{thebibliography}
\end{document}